\newtheorem{theorem}{Theorem}[section]
\newtheorem{corollary}[theorem]{Corollary}
\newtheorem{lemma}[theorem]{Lemma}
\newtheorem{proposition}[theorem]{Proposition}
\theoremstyle{definition}
\newtheorem{definition}[theorem]{Definition}
\newtheorem{remark}[theorem]{Remark}
\numberwithin{equation}{section}
\begin{document}
\title[ Extreme contractions on finite-dimensional Banach spaces]{ Extreme contractions on finite-dimensional Banach spaces}
\author[Sain, Sohel and  Paul  ]{Debmalya Sain, Shamim Sohel and Kallol Paul }

\newcommand{\acr}{\newline\indent}
\address[Sain]{Departmento de Analisis Matematico\\ Universidad de Granada\\ Spain. }
\email{saindebmalya@gmail.com}
\address[Sohel]{Department of Mathematics\\ Jadavpur University\\ Kolkata 700032\\ West Bengal\\ INDIA}
\email{shamimsohel11@gmail.com}
\address[Paul]{Department of Mathematics\\ Jadavpur University\\ Kolkata 700032\\ West Bengal\\ INDIA}
\email{kalloldada@gmail.com, kallol.paul@jadavpuruniversity.in}

\begin{abstract}
	We study extreme contractions in the setting of finite-dimensional polyhedral Banach spaces. Motivated by the famous Krein-Milman Theorem, we prove that a \emph{rank one} norm one linear operator between such spaces can be expressed as a convex combination of \emph{rank one} extreme contractions, whenever the domain is two-dimensional. We establish that the same result holds true in the space of all linear operators from $\ell_{\infty}^n(\mathbb{C}) $ to $ \ell_1^n (\mathbb{C}). $ Furthermore, we present a geometric characterization of extreme contractions between finite-dimensional polyhedral Banach spaces. 
       
\end{abstract}

\subjclass[2020]{Primary 46B20, Secondary 47L05}
\keywords{Extreme contractions; linear operators; polyhedral Banach spaces}

\maketitle

\section{Introduction.}

Identifying the extreme contractions is a classical problem in the geometry of Banach spaces.  Although extreme contractions between \emph{Hilbert spaces} were completely characterized by Kadison in his seminal work \cite{K2}, the problem is far from being well-understood in the framework of \emph{Banach spaces}, after all this time. Even for linear operators between \emph{finite-dimensional Banach spaces}, the study of extreme contractions is very much complicated and very little is known from a general perspective. One may explore \cite{G1,G2,I,K1,K2,L1, L2, L3, LP,MPD,RRBS,SRP,S1,S2} and the references therein, to gain more information in this context. Very recently, some initial progress has been made in \cite{SPM}, where extreme contractions were completely classified for \emph{two-dimensional real strictly convex and smooth Banach spaces}. In this article, we continue the advancement by characterizing the extreme contractions between \emph{finite-dimensional polyhedral Banach spaces}. The readers are invited to contrast the methods employed in this article with those in \cite{SPM}, to verify our claim that the presence  (or absence) of geometric features such as strict convexity and smoothness play a central role in determining the extremal structure of the unit ball of operator spaces. We also discuss a possible strengthening of the Krein-Milman Theorem, for \emph{rank one linear operators of unit norm} between certain finite-dimensional real or complex Banach spaces, including the polyhedral spaces.\\

We use the symbols $ \mathbb{X}, \mathbb{Y} $ to denote Banach spaces.  All the Banach spaces considered in this article will be real, unless we say the contrary.
Let $ B_{\mathbb{X}}= \{ x \in \mathbb{X}: \|x\| \leq 1\} $  and $ S_{\mathbb{X}}= \{ x \in \mathbb{X}: \|x\| = 1\} $ denote  the unit ball and the unit sphere of $\mathbb{X},$ respectively. The convex hull of a non-empty set $ S  \subset \mathbb{X}$  is the intersection of all convex sets in $\mathbb{X}$ containing $S$ and it is denoted by $Co(S).$ For any $\widetilde{x}=(x_1, x_2, \ldots, x_n)\in \mathbb{R}^n,$ we use the notation $ \widetilde{x} \geq 0 $ if each $ x_i \geq 0$ and $\widetilde{x} \leq 0$ if each $x_i \leq 0.$ Let $ \mathbb{L}(\mathbb{X}, \mathbb{Y})$ denote the Banach space of all bounded linear operators from $\mathbb{X}$ to $\mathbb{Y},$ endowed with the usual operator norm. For a  non-empty convex subset $ A $ of $  \mathbb{X},$ an element $ z \in A$ is said to be an extreme point of $A$ if $z = (1-t)x + ty,$ for some $t \in (0, 1)$ and some $ x, y \in A,$ implies that $ x = y = z.$ 
 The collection of all extreme points of  $A$ is denoted by $Ext(A).$  An operator $T \in \mathbb{L}(\mathbb{X}, \mathbb{Y})$ is said to be an extreme contraction if $T$ is an extreme point of $ B_{\mathbb{L}(\mathbb{X}, \mathbb{Y})}.$  It is trivial to observe that if $T$ is an extreme contraction then $\|T\|=1.$ Given any $ T \in \mathbb{L}(\mathbb{X}, \mathbb{Y}), $ let $M_T$ denote the norm attainment set of $T,$ i.e, $M_T=\{ x \in S_{\mathbb{X}}: \|Tx\|=\|T\|\}.$ The kernel of $T,$  denoted by $ Ker~T,$ is defined as $ Ker~T  = \{ x \in \mathbb{X} : Tx = 0 \}.$  A finite-dimensional Banach space $\mathbb{X}$ is said to be polyhedral if $Ext(B_{\mathbb{X}})$ is finite. In other words, $\mathbb{X}$ is a  polyhedral Banach space if $ B_{\mathbb{X}} $ is a polyhedron in $ \mathbb{R}^n, $ for some natural number $ n \geq 2. $ With regards to the geometric structure of such a space, let us now recall the following standard definitions. For a detailed exposition on polyhedrons and their properties, we refer to \cite{A, SPBB}.

\begin{definition}
	Let $\mathbb{X}$ be a finite-dimensional  Banach space. A \emph{polyhedron} $P$ is a non-empty compact subset of $\mathbb{X},$ which is the intersection of finitely many closed half-spaces of $\mathbb{X},$ i.e., $P= \cap_{i=1}^rM_i,$ where $M_i$ are closed half-spaces in $\mathbb{X}$ and $r \in \mathbb{N}.$ The dimension $dim(P)$ of the polyhedron $P$ is defined as the dimension of the subspace generated by the differences $v-w$ of vectors $v, w \in P.$ 
\end{definition}

\begin{definition}
	A polyhedron $Q$ is said to be a \emph{face} of the polyhedron $P$ if either $Q=P$ or if we can write $Q= P \cap \delta M,$ where $ M$ is a closed half-space in $\mathbb{X}$ containing $P$ and $\delta M$ denotes the boundary of $M.$ If $dim(Q)=i,$ then $Q$ is called an \emph{$i$-face} of $P.$ $(n-1)$- faces of $P$ are called \emph{facets} of $P$ and $1$-faces of $P$ are called \emph{edges} of $P.$
\end{definition}

We also  require the notion of Birkhoff-James orthogonality \cite{B, J} in a Banach space. 	For $x, y \in \mathbb{X}, $ we say that $ x $ is Birkhoff-James orthogonal to $ y, $ written as $ x \perp_B y, $ if $ \| x+\lambda y \| \geq \| x \| $ for all $ \lambda \in \mathbb{R}. $ For any $x \in \mathbb{X},$ $x ^\perp$ is defined as the collection of all $y \in \mathbb{X}$ such that $ x \perp_B y.$ Note that $x^\perp$ is a non-trivial subset of $\mathbb{X}$ and always contains a subspace of co-dimension $1$ by virtue of the Hahn-Banach Theorem. The next definition is motivated by the famous Krein-Milman Theorem, whose Banach space version states that if $K$ is a non-empty compact convex subset of a Banach space $\mathbb{X}$ then $ K = \overline{Co(Ext(K))},$  and therefore, in particular, Ext$(K) \neq \emptyset. $

\begin{definition}
	Let $\mathbb{X},\mathbb{Y}$ be Banach spaces and let $r \in \mathbb{N}.$ We say that the pair $(\mathbb{X}, \mathbb{Y})$ satisfies the \emph{rank invariant Krein-Milman Property of order $r$} if every norm one linear operator in $\mathbb{L}(\mathbb{X}, \mathbb{Y})$ having  rank $r$  can be expressed as a convex combination of extreme contractions $\mathbb{L}(\mathbb{X}, \mathbb{Y})$ having rank $r.$
\end{definition}

 In case of finite-dimensional polyhedral Banach spaces, to have a better understanding of the rank invariant Krein-Milman Property of order $1,$ the following definition turns out to be very useful. 
	\begin{definition}
		Let $\mathbb{X}$ and $\mathbb{Y}$ be finite-dimensional polyhedral Banach spaces, where $\dim(\mathbb{X})=n.$ We say that an $ n \times n$ matrix $ A=(a_{ij})_{ 1 \leq i,j \leq n}$ is a \emph{Krein-Milman companion matrix} if there exist	
		\begin{itemize}
				\item[(i)] rank $1$ extreme contractions $T_1, T_2, \ldots, T_n $ in $\mathbb{L}(\mathbb{X}, \mathbb{Y}),$  and 
				\item[(ii)]  $ v \in  \bigcap_{j=1}^n (M_{T_j} \cap Ext(B_{\mathbb{X}}))$ satisfying $ T_1v= T_2v= \ldots= T_nv$ 
					\end{itemize}
				such that the following holds:  $ T_jz_i= a_{ij}T_jv,$ and $ a_{1j}=1,$  for $ 2 \leq i \leq n,~  1 \leq j \leq n, $ where 
				$z_2, z_3, \ldots, z_n \in v^\perp \cap S_{\mathbb{X}}$ are linearly independent. 
	
	%such that for any  $ 2 \leq i \leq n,~  1 \leq j \leq n, $ it holds that $ T_jz_i= a_{ij}T_jv,$ and $ a_{1j}=1. $
	\end{definition}

	The reader should note that in the above definition, the conditions (i) and (ii) are ensured by virtue of \cite[Th.2.1]{RS} and Theorem \ref{rank1} of the present article. Let $ \mathbb{H} $ be a Hilbert space of dimension  $n >1. $ $ T \in \mathbb{L}(\mathbb{H}) $ is an extreme contraction if and only if $T$ is an isometry. In particular, for the $n$-dimensional Euclidean space $ \mathbb{H} $, the pair $ (\mathbb{H},\mathbb{H}) $  possesses the rank invariant Krein-Milman Property of order $ r$ if and only if $ r=n. $\\	
	
	The present article is divided into four sections including the introductory one. In the next section,  we focus exclusively on \emph{rank one} operators. In particular, we characterize the \emph{rank one} extreme contractions on finite-dimensional polyhedral Banach spaces. As an application of this result, we obtain a sufficient condition for the rank invariant Krein-Milman Property of order $1,$ in the said setting. In the succeeding section, we illustrate that such a strengthening of the Krein-Milman Property for rank $ 1 $ operators is not limited to real Banach spaces. In particular, we prove that the pair $(\ell_{\infty}^n, \ell_{1}^n)$ satisfies the rank invariant Krein-Milman Property of order $1,$ over the complex field $\mathbb{C}.$ 
	%	In order to do that we charactewrize the \emph{rank one} operators and \emph{rank one} extreme contractions of $\mathbb{L}(\ell_{\infty}^n, \ell_{1}^n).$ 
	In the final section, we characterize the extreme contractions in $\mathbb{L}(\mathbb{X}, \mathbb{Y}),$ where $\mathbb{X}, \mathbb{Y}$ are two finite-dimensional polyhedral Banach spaces.
	
	 %In view of the considerably large number of new concepts introduced in the final section, and their apparent technical nature, we would like to emphasize that their importance in studying extreme contractions seems self-explanatory in due course of time. In particular, it is possible to obtain concrete conclusions regarding extreme contractions, using the application of these abstract notions introduced here. We refer the readers to Corollary \ref{example:corollary} for an illustrative example in support of our claim. %Moreover, the norm of the opertaor $T \in \mathbb{L}(\ell_{\infty}^n, \ell_{1}^n)$ is denoted as $\|T\|_{\infty, 1}.$
	%Motivated by the work of Lindenstrauss  and Perles in \cite{LP}, the 
	%definitions of L-P property and weak L-P property was introduced recently in \cite{RRBS, SRP} to study extreme contractions.  Following \cite{RRBS, SRP} we say that the pair $(\mathbb{X}, \mathbb{Y})$ satisfies L-P  property if for each linear operator $T \in  S_{\mathbb{L}(\mathbb{X}, \mathbb{Y})} , $  $T$ is an extreme contraction if and only if  $T (Ext(B_{\mathbb{X}})) \subset Ext(B_{\mathbb{Y}}).$
%	Moreover, the pair $(\mathbb{X}, \mathbb{Y})$ has weak  L-P  property if for each extreme contraction $T \in  S_{\mathbb{L}(\mathbb{X}, \mathbb{Y})}, $  $T (Ext(B_{\mathbb{X}})) \cap  Ext(B_{\mathbb{Y}}) \neq \phi.$

 \section{Main Results.}
 
 \section*{Section: I}
 
 We  begin this section with the following two basic observations which will come handy in our discussions. The proofs are omitted, as they are quite elementary.
 
 \begin{proposition}\label{prop1}
 	Let $ \mathbb{X} $ be an $n$-dimensional polyhedral Banach space.  Then every facet $ F$ of $B_{\mathbb{X}}$ contains  $n$ linearly independent extreme points of $ B_{\mathbb{X}}.$ 
 \end{proposition}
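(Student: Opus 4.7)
The plan is to exploit the dimensional hypothesis on the facet together with the fact that the affine hyperplane containing a facet of $B_{\mathbb{X}}$ does not pass through the origin. Concretely, let $F$ be a facet of $B_{\mathbb{X}}$, so by the definition recalled in the paper, $F = B_{\mathbb{X}} \cap \delta M$ for some closed half-space $M$ containing $B_{\mathbb{X}}$, with $\dim(F)=n-1$. Write $\delta M = \{x \in \mathbb{X} : f(x)=1\}$ for a suitable supporting functional $f \in S_{\mathbb{X}^{*}}$ (after the obvious normalization, using that $0$ is in the interior of $B_{\mathbb{X}}$). The key observation is that $0 \notin \delta M$, so affine independence of vectors lying in $\delta M$ is equivalent to their linear independence in $\mathbb{X}$.

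Next I would produce $n$ extreme points of $F$ whose affine hull is all of $\delta M$. Since $F$ is itself a polyhedron (hence a compact convex set in the finite-dimensional space $\mathbb{X}$), the Krein--Milman theorem gives $F = Co(\mathrm{Ext}(F))$, and as $\mathbb{X}$ is polyhedral, $\mathrm{Ext}(F)$ is finite. Because $\dim(F)=n-1$, the affine hull of $\mathrm{Ext}(F)$ is an $(n-1)$-dimensional affine subspace, so one can select $v_1,\ldots,v_n \in \mathrm{Ext}(F)$ that are affinely independent.

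Finally, I would promote these to extreme points of $B_{\mathbb{X}}$ by the standard fact that extreme points of a face of a convex set are extreme points of the ambient set: if $v_i = (1-t)x + ty$ with $x,y \in B_{\mathbb{X}}$ and $t \in (0,1)$, then applying $f$ yields $1 = f(v_i) = (1-t)f(x) + tf(y)$ with $f(x),f(y) \le 1$, forcing $f(x)=f(y)=1$, i.e.\ $x,y \in F$; extremality of $v_i$ in $F$ then forces $x=y=v_i$. Combined with the opening remark that affine independence in $\delta M$ equals linear independence in $\mathbb{X}$, this gives $n$ linearly independent extreme points of $B_{\mathbb{X}}$ inside $F$.

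There is no genuine obstacle here; the proposition is essentially a repackaging of three standard facts (Krein--Milman for polytopes, extremality is inherited along faces, and a hyperplane missing the origin converts affine to linear independence). The only point to be careful about is the equivalence of the two definitions of $\dim(F)$ (the one in the paper, via differences $v-w$, versus affine-hull dimension), which is immediate.
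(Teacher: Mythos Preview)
Your argument is correct and complete: the three ingredients you isolate (Krein--Milman for the polytope $F$, the fact that extreme points of a face are extreme points of the ambient convex body, and the passage from affine to linear independence via the supporting hyperplane $\{f=1\}$ missing the origin) are exactly what is needed, and you handle each cleanly. The paper itself omits the proof as elementary, so there is nothing to compare against; your write-up is a faithful unpacking of the standard argument the authors had in mind.
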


 \begin{proposition}\label{prop2}
 	Let $ \mathbb{X} $ be an $n$-dimensional polyhedral Banach space. Then a face $F(\neq B_{\mathbb{X}})$ of the polyhedron $ B_{\mathbb{X}}$  is a facet  if $F$ contains $n$ linearly independent points.
 \end{proposition}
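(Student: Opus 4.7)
The plan is to use the defining structure of a proper face combined with a simple linear-algebraic calculation with differences of the given linearly independent points.

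First, I would observe that since $F \neq B_{\mathbb{X}}$ is a face of the polyhedron $B_{\mathbb{X}}$, by definition we can write $F = B_{\mathbb{X}} \cap \delta M$, where $M$ is a closed half-space in $\mathbb{X}$ containing $B_{\mathbb{X}}$ and $\delta M$ is the bounding hyperplane of $M$. In particular, $F \subset \delta M$, and $\delta M$ is an affine subspace of $\mathbb{X}$ of dimension $n-1$. This immediately gives the upper estimate $\dim(F) \leq n - 1$: indeed, for any $v, w \in F \subset \delta M$, the difference $v - w$ lies in the translated linear subspace $\delta M - \delta M$, which has dimension $n-1$.

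Next I would produce the matching lower estimate using the hypothesis. Let $v_1, v_2, \ldots, v_n \in F$ be linearly independent. I claim the vectors $v_2 - v_1, v_3 - v_1, \ldots, v_n - v_1$ are linearly independent in $\mathbb{X}$. Indeed, if $\sum_{i=2}^{n} \alpha_i (v_i - v_1) = 0$, then
\[
\sum_{i=2}^{n}\alpha_i v_i \;-\; \Bigl(\sum_{i=2}^{n}\alpha_i\Bigr) v_1 \;=\; 0,
\]
and the linear independence of $v_1, \ldots, v_n$ forces $\alpha_2 = \cdots = \alpha_n = 0$. Since the subspace generated by differences of elements of $F$ contains these $n-1$ linearly independent vectors, we get $\dim(F) \geq n-1$.

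Combining the two estimates yields $\dim(F) = n - 1$, so $F$ is by definition a facet of $B_{\mathbb{X}}$. I do not foresee a substantive obstacle; the only mild subtlety is recognizing that the definition of $\dim(F)$ via differences (rather than as an affine dimension of an affine hull) is what makes the lower bound essentially automatic once one has $n$ linearly independent points. The proof is indeed elementary, consistent with the authors' remark.
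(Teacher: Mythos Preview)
Your argument is correct. The paper explicitly omits the proof of this proposition, stating only that it is ``quite elementary,'' so there is no proof in the paper to compare against; your approach---bounding $\dim(F)$ above via the supporting hyperplane $\delta M$ and below via the $n-1$ differences $v_i-v_1$---is exactly the kind of routine verification the authors had in mind.
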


%\begin{proof}
%	Let $v_1, v_2, \ldots, v_n \in F$ be the linearly independent points. Take the set $ S=\{ v_1- v_2, v_2-v_3, \ldots, v_{n-1}-v_n\}.$ It is easy to observe that $ S$ is a linearly independent set. Since $ F	 \neq B_{\mathbb{X}},$  $dim (F) = n-1.$ Therefore, $F$ is a facet of the polyhedron $ B_{\mathbb{X}}$.
%\end{proof}

In the following theorem, we characterize the \emph{rank one} extreme contractions between finite-dimensional polyhedral Banach spaces.
% We would like to remark that the proof is independent of the newly introduced concepts mentioned in the previous section. 

\begin{theorem}\label{rank1}
	Let $\mathbb{X}$ and $\mathbb{Y}$ be finite-dimensional polyhedral Banach spaces, where $dim(\mathbb{X})=n.$ Let $T \in S_{\mathbb{L}(\mathbb{X}, \mathbb{Y})}$ be such that rank $ T= 1.$ Then $T$ is an extreme contraction if and only if there exists a facet $F$ of the polyhedron $B_{\mathbb{X}}$
	  such that $  M_{T}= F \cup (-F) $ and $ Tx \in Ext(B_{\mathbb{Y}}),$ for any $x \in F. $ 
\end{theorem}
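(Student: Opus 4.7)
The plan is to exploit the rank one structure. Write $T = y \otimes f$ (i.e.\ $Tx = f(x)y$) with $y \in S_{\mathbb{Y}}$ and $f \in S_{\mathbb{X}^*}$, noting that $\|T\| = \|y\|\|f\| = 1$. Then
\[
M_T \;=\; \{x \in S_{\mathbb{X}}: |f(x)| = 1\} \;=\; F_+ \cup F_-,
\]
where $F_{\pm} := \{x \in B_{\mathbb{X}}: f(x) = \pm 1\}$ are opposite (closed) faces of $B_{\mathbb{X}}$ with $F_- = -F_+$. The statement then reduces to showing that $T$ is an extreme contraction if and only if $y \in \mathrm{Ext}(B_{\mathbb{Y}})$ and $F_+$ is a \emph{facet} of $B_{\mathbb{X}}$.

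\textbf{Forward direction.} Assume $T$ is an extreme contraction. If $y = \tfrac{1}{2}(y_1 + y_2)$ with $y_1,y_2 \in B_{\mathbb{Y}}$, then $T = \tfrac{1}{2}(y_1 \otimes f + y_2 \otimes f)$ where each summand has norm at most $1$; extremeness of $T$ forces $y_1 = y_2 = y$, so $y \in \mathrm{Ext}(B_{\mathbb{Y}})$. A symmetric decomposition of $f$ shows that $f \in \mathrm{Ext}(B_{\mathbb{X}^*})$. Since $\mathbb{X}$ is polyhedral, the extreme points of $B_{\mathbb{X}^*}$ are precisely the (unit) outward normals to the facets of $B_{\mathbb{X}}$; equivalently, $f \in \mathrm{Ext}(B_{\mathbb{X}^*})$ iff the exposed face $F_+$ is a facet. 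Setting $F := F_+$, we obtain $M_T = F \cup (-F)$ and $Tx = f(x)y = y \in \mathrm{Ext}(B_{\mathbb{Y}})$ for every $x \in F$.

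\textbf{Backward direction.} Assume $M_T = F \cup (-F)$ for a facet $F$, and $Tx \in \mathrm{Ext}(B_{\mathbb{Y}})$ for every $x \in F$. Since $T$ has rank one, $T(F)$ lies in a one-dimensional subspace of $\mathbb{Y}$; since $\|Tx\| = 1$ on $F$ and $T(F)$ is convex, the restriction $T|_F$ is constant, say $Tx \equiv y \in \mathrm{Ext}(B_{\mathbb{Y}})$. Suppose $T = \tfrac{1}{2}(T_1 + T_2)$ with $\|T_i\| \leq 1$. For each $x \in F$, $y = Tx = \tfrac{1}{2}(T_1 x + T_2 x)$ exhibits $y$ as the midpoint of two vectors in $B_{\mathbb{Y}}$, and extremeness of $y$ forces $T_1 x = T_2 x = y = Tx$. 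By Proposition \ref{prop1}, $F$ contains $n$ linearly independent points, so $T_1$ and $T_2$ coincide with $T$ on a basis of $\mathbb{X}$, whence $T_1 = T_2 = T$. Thus $T$ is an extreme contraction.

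\textbf{Main obstacle.} The only non-elementary ingredient is the equivalence, in the polyhedral setting, between $f \in \mathrm{Ext}(B_{\mathbb{X}^*})$ and $F_+$ being a facet of $B_{\mathbb{X}}$. This is the classical face--vertex duality for polytopes (facets of $B_{\mathbb{X}}$ correspond to vertices of the polar $B_{\mathbb{X}^*}$), and it is precisely here that the \emph{polyhedrality} hypothesis enters in an essential way. Everything else is a clean exploitation of the tensor decomposition of a rank one operator, together with Proposition \ref{prop1}.
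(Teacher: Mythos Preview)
Your proof is correct. The backward (sufficiency) direction is essentially identical to the paper's: both pick $n$ linearly independent points in the facet $F$ via Proposition~\ref{prop1} and use extremeness of $y=Tx$ to force $T_1=T_2=T$.

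The forward (necessity) direction, however, is genuinely different. The paper argues more constructively: it invokes \cite[Th.~2.2]{SRP} to obtain $n$ linearly independent extreme points $v_1,\dots,v_n\in M_T\cap Ext(B_{\mathbb{X}})$ all mapped to the same $w$, then builds the facet $F$ explicitly as $S_{\mathbb{X}}\cap\delta M$ for the half-space $M=\{\sum d_iv_i:\sum d_i\le 1\}$ and appeals to Proposition~\ref{prop2}; extremeness of $w$ is shown by decomposing $w=\tfrac12(w_1+w_2)$ and lifting to operators $T_1,T_2$. Your argument is more structural: you factor $T=y\otimes f$, observe that extremeness of $T$ forces both $y\in Ext(B_{\mathbb{Y}})$ and $f\in Ext(B_{\mathbb{X}^*})$ by tensoring a decomposition of either factor, and then invoke the classical polar duality for polytopes (vertices of $B_{\mathbb{X}^*}$ $\leftrightarrow$ facets of $B_{\mathbb{X}}$) to conclude that $F_+=\{x:f(x)=1\}$ is a facet. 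Your route avoids the external citation to \cite{SRP} and makes the role of polyhedrality transparent in a single step; the paper's route is more self-contained in that it manufactures the facet by hand from Propositions~\ref{prop1}--\ref{prop2}, at the cost of importing the spanning result for $M_T$.
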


\begin{proof}
	We first prove the sufficient part of the theorem. Let $T= \frac{1}{2} (T_1 + T_2),$ where $T_1, T_2 \in S_{\mathbb{L}(\mathbb{X}, \mathbb{Y})}.$ By the hypothesis, there exists a facet $F$ of the polyhedron $ B_{\mathbb{X}}$ such that $ Tx \in Ext(B_{\mathbb{Y}})$, for any  $ x \in F.$ Following Proposition \ref{prop1}, there exist $n$  linearly independent extreme points $v_1, v_2, \ldots, v_n \in F$ such that $Tv_i \in Ext(B_{\mathbb{Y}}),$ for all $ i \in \{ 1, 2, \ldots, n\}.$ Then for any $ 1 \leq i \leq n, $ $Tv_i= \frac{1}{2} (T_1v_i+ T_2v_i),$ which implies that $Tv_i= T_1v_i= T_2v_i.$ Therefore, $T= T_1= T_2$ and consequently, $T$ is an extreme contraction. 
	
	We next prove the necessary part of the theorem.  Since rank $T=1,$ suppose that $T(\mathbb{X}) \cap S_{\mathbb{Y}}= \{\pm w\}$. Since $T\in S_{\mathbb{L}(\mathbb{X}, \mathbb{Y})}$ is an extreme contraction, it follows from \cite[Th. 2.2]{SRP} that  $ span(M_T \cap Ext(B_\mathbb{X}))= \mathbb{X}.$  Let $v_1, v_2, \ldots, v_n$ be $n$ linearly independent extreme points of $B_{\mathbb{X}}$ such that $ Tv_i=w,$ for all $ i \in \{1, 2, \ldots, n\}.$ For any $ x \in Co(\{v_1, v_2, \ldots, v_n\}) \cap S_\mathbb{X},$ where $x=c_1 v_1 + c_2v_2+ \ldots+ c_n v_n, $
	\[ Tx= T(c_1 v_1 + c_2v_2+ \ldots+ c_n v_n)= (c_1 +c_2+\ldots+ c_n) w=w.
	\]
	This implies that $x \in M_T$ and therefore, $Co(\{v_1, v_2, \ldots, v_n \}) \cap S_\mathbb{X} \subset M_T.$ Let us consider the set  $M= \{ (d_1v_1+ d_2v_2+ \ldots+ d_nv_n) \in \mathbb{X}: d_1+ d_2+ \ldots+ d_n \leq 1\}$ and let
	  $F= S_{\mathbb{X}} \cap \delta M,$ where $\delta M$ is the boundary of $M.$ From Proposition \ref{prop2}, $ F$ is a facet of $ B_{\mathbb{X}}.$ Observe that $Co(\{v_1, v_2, \ldots, v_n \})  \cap S_\mathbb{X} \subset F$ and from Proposition \ref{prop2}, it follows that $ F \subset M_T.$  Since $ T$ is rank $1$, we conclude that $ M_T=  F \cup (-F).$ 
	   We next claim that $Tx=w \in Ext(B_\mathbb{Y}),$ for any $x \in F.$ Let $ w_1, w_2 \in S_{\mathbb{Y}}$ such that $w=\frac{1}{2} (w_1+ w_2).$  Define $T_1, T_2 \in \mathbb{L}(\mathbb{X}, \mathbb{Y})$ such that $T_1v_i=w_1$ and $T_2v_i=w_2,$ for all $ i \in \{ 1, 2, \ldots, n\}.$ Clearly,  $T= \frac{1}{2} (T_1+T_2).$  For any $\widetilde{u} \in S_{\mathbb{X}} \setminus \pm F,$ it is easy  to observe that $ \widetilde{u} \in \{ (d_1v_1+ d_2v_2+ \ldots + d_nv_n) \in \mathbb{X} : -1 < d_1+ d_2+ \ldots + d_n <1\}$ and therefore, $ T_1, T_2 \in S_{\mathbb{L}(\mathbb{X}, \mathbb{Y})}.$  Since $T$ is an extreme contraction, $T= T_1= T_2.$ This proves that $w= w_1= w_2$ and hence $ w$ is an extreme point of $B_{\mathbb{Y}}.$
\end{proof} 

The above characterization allows us to explicitly compute the number of rank $1$ extreme contractions between any pair of polyhedral Banach spaces, in the finite-dimensional context. This is presented in the following corollary.
%\textcolor{red}{ It should be noted that for any finite-dimensional polyhedral Banach space $ \mathbb{X}, $ the number of facets of $ B_{\mathbb{X}}$ is equal to the number of extreme points of $ B_{\mathbb{X}^*}, $ where $ \mathbb{X}^* $ denotes the dual space of $ \mathbb{X}. $}

\begin{corollary}
	Let $\mathbb{X}$ and  $\mathbb{Y}$ be finite-dimensional polyhedral Banach spaces. Suppose that the number of facets of $ B_{\mathbb{X}}$ is $2r$ and the number of extreme points of  $B_{\mathbb{Y}} $ is $ 2s.$ Then the number of rank $1$ extreme contractions in $ \mathbb{L}(\mathbb{X}, \mathbb{Y})$ is $2rs.$
\end{corollary}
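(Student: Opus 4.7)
The plan is to set up a two-to-one correspondence between pairs (facet of $B_{\mathbb{X}}$, extreme point of $B_{\mathbb{Y}}$) and rank $1$ extreme contractions in $\mathbb{L}(\mathbb{X},\mathbb{Y})$, using Theorem \ref{rank1} as the main engine.

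First I would parametrize the rank $1$ extreme contractions. By Theorem \ref{rank1}, each such operator $T$ is associated with a unique facet $F$ of $B_{\mathbb{X}}$ such that $M_{T}=F\cup(-F)$, and since $T$ has rank one and $F$ is convex and disjoint from $-F$, the image $T(F)$ must be a single extreme point $w\in \mathrm{Ext}(B_{\mathbb{Y}})$. This yields a well-defined map $\Phi: T \longmapsto (F,w)$ from the set of rank $1$ extreme contractions into the set of ordered pairs (facet, extreme point).

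Next I would show that $\Phi$ hits every pair. Given a facet $F$ and an extreme point $w\in\mathrm{Ext}(B_{\mathbb{Y}})$, take the supporting functional $f\in S_{\mathbb{X}^{*}}$ satisfying $f(F)=\{1\}$ (which exists because $F$ is a facet), and define $T\in\mathbb{L}(\mathbb{X},\mathbb{Y})$ by $Tx=f(x)\,w$. Then $\|T\|=\|f\|\cdot\|w\|=1$, the norm attainment set equals $F\cup(-F)$, and $T(F)=\{w\}\subset\mathrm{Ext}(B_{\mathbb{Y}})$. By the sufficient direction of Theorem \ref{rank1}, $T$ is a rank $1$ extreme contraction with $\Phi(T)=(F,w)$. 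The key (but routine) subtlety to check is the fibre structure: if two pairs $(F,w)$ and $(F',w')$ give the same operator $T=f\otimes w=f'\otimes w'$, then since $T$ has rank one and the factorisation of a rank one operator is unique up to a scalar on $S_{\mathbb{X}^{*}}\times S_{\mathbb{Y}}$, we must have either $(F,w)=(F',w')$ or $(F,w)=(-F',-w')$. Thus $\Phi$ is injective modulo the involution $(F,w)\mapsto(-F,-w)$, i.e., it is a two-to-one surjection.

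The counting step is then immediate: the target set of pairs has cardinality $(2r)(2s)=4rs$, and since the involution $(F,w)\mapsto(-F,-w)$ has no fixed points (no facet is its own negative, as facets lie off the origin), the quotient has exactly $2rs$ elements, matching the number of rank $1$ extreme contractions in $\mathbb{L}(\mathbb{X},\mathbb{Y})$. The only possible obstacle is ensuring that distinct facets really do give distinct supporting functionals $f$ and that the rank one decomposition $T=f\otimes w$ is unique up to the obvious $\pm$ symmetry; both follow from standard polyhedral geometry and the uniqueness (up to scalars) of rank one factorisations.
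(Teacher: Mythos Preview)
Your proposal is correct and follows essentially the same approach as the paper, which simply enumerates the rank $1$ extreme contractions as the operators sending a chosen facet $F_i$ to $\pm w_j$ (via Theorem \ref{rank1}) and counts $2rs$ of them. One small clean-up: your map $\Phi$ from operators to pairs is not actually well-defined as written, since $M_T = F \cup (-F)$ determines only the unordered pair $\{F,-F\}$; what you really want---and what your fibre analysis already describes---is the well-defined two-to-one surjection in the \emph{reverse} direction, $(F,w) \mapsto f_F \otimes w$, from pairs onto rank $1$ extreme contractions.
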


\begin{proof}
	Let $ \pm F_1, \pm F_2, \ldots, \pm F_r$ be the facets of $ B_{\mathbb{X}}$ and  let $ Ext(B_{\mathbb{Y}})= \{ \pm w_1, \pm w_2, \ldots,$ $ \pm w_s\}.$  It follows from Theorem \ref{rank1} that any rank $1$ extreme contraction  is of the form   $ T_{ij}(x)=  w_j$  or  $ T_{ij}(x)=  -w_j, $ for any $x \in F_i,$ where $ 1 \leq i \leq r, ~1 \leq j \leq s.$ Therefore, the number of rank $1$ extreme contractions in $ \mathbb{L}(\mathbb{X}, \mathbb{Y})$ is $2rs.$
\end{proof}

In the two-dimensional case, of course, the number of facets of $ B_{\mathbb{X}}$ is equal to the number of extreme points of $ B_{\mathbb{X}}$, and therefore, the following simple formulation holds true:\\
\emph{Whenever the domain is two-dimensional, the number of rank one extreme contractions between two polyhedral Banach spaces is equal to half the product of the number of extreme points of the unit balls of the two spaces.}\\

In addition, we should also observe an important fact regarding the extremal structure of operator spaces, in the polyhedral case. It is elementary to see that purely from a cardinality point of view, the extremal structure of the unit ball of a given Banach space does not determine the geometry of space. In other words, two polyhedral Banach spaces of the same dimension, which are not isometric, may  have the same number of extreme points of their respective unit balls. In this context, the above corollary ensures that the number of rank one extreme contractions between finite-dimensional polyhedral spaces is dependent \emph{only} on the extremal structures of the two spaces, \emph{only} from the point of view of cardinality. The isometric structures of the two spaces do not apparently play any role whatsoever, as far as rank one extreme contractions are concerned.

Our next goal is to present a sufficient condition for the rank invariant Krein-Milman Property of order $1$, in case of a pair of finite-dimensional polyhedral Banach spaces. We require the following well-known result for this purpose.

\begin{lemma}\cite{GKT}{(Farkas' Lemma)}\label{farkas}
	 Let  $A \in \mathbb {R} ^{m\times n}$ and let $ b \in \mathbb {R}^m. $ Then exactly one of the following two assertions is true:
	
\item(i)	There exists an $x \in \mathbb {R} ^{n}$ such that  $Ax =b $  and  $x  \geq 0.$

\item(ii)	There exists a $y \in \mathbb {R} ^{m}  $ such that $A ^t y \geq 0$ and $b ^t y <0.$
\end{lemma}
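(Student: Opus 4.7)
The plan is to prove Farkas' Lemma in the classical way, via the separating hyperplane theorem applied to the finitely generated cone spanned by the columns of $A$. First I would dispatch the easy half: (i) and (ii) cannot hold simultaneously, because if $Ax = b$ with $x \geq 0$ and $A^t y \geq 0$, then $b^t y = (Ax)^t y = x^t(A^t y) \geq 0$, contradicting $b^t y < 0$.

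For the main direction, I would assume (i) fails and deduce (ii). The idea is to form the conic hull of the columns $a_1, \ldots, a_n$ of $A$,
\[
C = \{Ax : x \in \mathbb{R}^n,\, x \geq 0\},
\]
which is a convex cone in $\mathbb{R}^m$ containing the origin. By hypothesis $b \notin C$. If I can show that $C$ is closed, then the standard closed convex set separation theorem produces $y \in \mathbb{R}^m$ and $\alpha \in \mathbb{R}$ with $y^t c \geq \alpha$ for every $c \in C$ and $y^t b < \alpha$. Since $0 \in C$, this forces $\alpha \leq 0$, hence $b^t y < 0$. Moreover, for each column $a_j$ the ray $\{t a_j : t \geq 0\}$ lies in $C$, so $t(y^t a_j) \geq \alpha$ for all $t \geq 0$; letting $t \to \infty$ forces $y^t a_j \geq 0$, i.e., $A^t y \geq 0$, giving (ii).

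The one non-trivial step is the closedness of $C$, and this is where I expect the main obstacle to lie. My plan is to invoke Carath\'eodory's theorem for conic combinations: any element of $C$ can be written as a non-negative combination of a linearly independent subset of $\{a_1, \ldots, a_n\}$. Indexing over all such subsets $J \subseteq \{1, \ldots, n\}$, one gets
\[
C = \bigcup_{J} \Bigl\{ \sum_{j \in J} t_j a_j : t_j \geq 0 \Bigr\},
\]
a finite union. For each $J$ with $\{a_j\}_{j \in J}$ linearly independent, the corresponding set is the image of the closed nonnegative orthant $\mathbb{R}_{\geq 0}^{|J|}$ under an injective linear map into $\mathbb{R}^m$, which is a homeomorphism onto its image, hence closed. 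A finite union of closed sets is closed, so $C$ is closed, and the argument above completes the proof.
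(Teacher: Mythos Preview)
Your proof is correct and follows the standard route to Farkas' Lemma: mutual exclusivity by a direct computation, then separation of $b$ from the finitely generated cone $C = \{Ax : x \geq 0\}$, with closedness of $C$ established via the conic form of Carath\'eodory's theorem. Each step is sound as stated.

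There is, however, no ``paper's own proof'' to compare against. The paper quotes Farkas' Lemma as a well-known external result, citing \cite{GKT}, and does not supply any argument for it; the lemma is used purely as a tool in the proof of Theorem~\ref{companion}. So your write-up goes beyond what the paper does, and there is nothing further to contrast.
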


Using this result, we have the following sufficient condition for rank invariant Krein-Milman Property of order $1.$
 
\begin{theorem}\label{companion}
		Let $\mathbb{X}$ and $\mathbb{Y}$ be  finite-dimensional polyhedral Banach spaces, where $dim(\mathbb{X})=n.$ The pair $(\mathbb{X}, \mathbb{Y})$ satisfies the rank invariant Krein-Milman Property of order $1$ if for any $ \widetilde{y}=(y_1, y_2, \ldots, y_n)\in \mathbb{R}^n$ satisfying $ y_1 < 0 $ and for any Krein-Milman companion matrix $ A,$  $A^t\widetilde{y}^t \ngeqq 0.$ 
\end{theorem}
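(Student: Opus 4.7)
The plan is to encode the decomposition problem as a linear feasibility question and apply Farkas' Lemma (Lemma \ref{farkas}), using the hypothesis to rule out one branch of the Farkas obstruction and polyhedral duality to handle the complementary branch.

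I would first reduce to the setting $Tv \in Ext(B_\mathbb{Y})$ for some $v \in M_T \cap Ext(B_\mathbb{X})$. Writing the rank one operator as $T(x) = f(x)\, w$ with $f \in S_{\mathbb{X}^*}$, $w \in S_{\mathbb{Y}}$, if $w \notin Ext(B_\mathbb{Y})$ I would decompose $w = \sum_k \lambda_k w_k$ over extreme points and split $T = \sum_k \lambda_k T^{(k)}$ with $T^{(k)}(x) := f(x)\, w_k$; each $T^{(k)}$ is rank one, of unit norm, and sends $v$ to an extreme point. Pick $v \in M_T \cap Ext(B_\mathbb{X})$ with $f(v) = 1$ (possible since $f$ attains its norm at a vertex of the polytope $B_\mathbb{X}$) and complete it to a basis $\{v, z_2, \ldots, z_n\}$ with $z_i \in v^\perp \cap S_\mathbb{X}$. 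By Theorem \ref{rank1}, the rank one extreme contractions $T_j$ with $T_j v = w$ and $v \in M_{T_j}$ correspond bijectively to facets $F_j$ of $B_\mathbb{X}$ containing $v$ via $T_j(x) = \phi_j(x)\, w$, where $\phi_j$ is the supporting functional normalized so that $\phi_j \equiv 1$ on $F_j$. Since every vertex of an $n$-polytope lies in at least $n$ facets, one has $m \geq n$.

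Next I would set up the matrix equation $Ac = b$ with $c \geq 0$, where $A$ is the $n \times m$ matrix defined by $a_{1j}=1$ and $a_{ij}=\phi_j(z_i)$ for $i \geq 2$, and $b = (1, f(z_2), \ldots, f(z_n))^t$. A non-negative solution $c$ gives the required convex combination $T = \sum_j c_j T_j$, with the constraint $\sum_j c_j = 1$ automatic from the first row. By Farkas' Lemma, solvability is equivalent to the absence of $y \in \mathbb{R}^n$ with $A^t y \geq 0$ and $b^t y < 0$. Arguing by contradiction, suppose such $y$ exists. Then for every choice of $n$ columns, the corresponding $n \times n$ submatrix $A'$ still satisfies $(A')^t y \geq 0$, and each such $A'$ qualifies as a Krein-Milman companion matrix (the $T_j$'s share $v$ and $T_j v = w$, and the $z_i$'s are linearly independent in $v^\perp \cap S_\mathbb{X}$ by construction); hence the hypothesis forces $y_1 \geq 0$.

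To close the remaining branch, I would invoke polyhedral duality: the face $J(v) := \{g \in B_{\mathbb{X}^*} : g(v) = 1\}$ of $B_{\mathbb{X}^*}$ has as extreme points precisely the $\phi_j$'s, so $f = \sum_j \lambda_j \phi_j$ for some $\lambda_j \geq 0$ with $\sum_j \lambda_j = 1$. Setting $u := y_1 v + \sum_{i \geq 2} y_i z_i \in \mathbb{X}$, one computes $b^t y = f(u) = \sum_j \lambda_j \phi_j(u) = \sum_j \lambda_j (A^t y)_j \geq 0$, contradicting $b^t y < 0$. The main obstacle is precisely this $y_1 \geq 0$ branch, which the hypothesis alone does not cover and which requires the geometric input that $f$ admits a convex decomposition in terms of the facet-supporting functionals at $v$; a minor auxiliary check is that every $n$-column sub-selection of $A$ genuinely qualifies as a Krein-Milman companion matrix in the sense of the definition, which is immediate from Theorem \ref{rank1} and the construction of the basis and the $T_j$.
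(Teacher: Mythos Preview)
Your proof is correct but diverges from the paper's at one key simplification. The paper chooses $z_2,\ldots,z_n$ specifically in $Ker\,T\cap S_\mathbb{X}$ (valid since $\dim Ker\,T=n-1$ and $Ker\,T\subset v^\perp$), which forces the right-hand side to be $b=(1,0,\ldots,0)^t$; the Farkas obstruction $b^t y<0$ is then literally $y_1<0$, and the hypothesis applied to the single $n\times n$ companion matrix built from any $n$ facets through $v$ disposes of it immediately---no second branch arises. You instead keep $z_i\in v^\perp$ generic, use all $m\geq n$ facets, and add the polyhedral-duality step $f=\sum_j\lambda_j\phi_j$ to close the case $y_1\geq 0$. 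Observe, however, that your identity $b^t y=f(u)=\sum_j\lambda_j(A^t y)_j\geq 0$ nowhere uses the sign of $y_1$: it kills the Farkas obstruction on its own, so your appeal to the hypothesis in step~6 is redundant and you have in effect established that the rank-one Krein--Milman property holds \emph{unconditionally} for every pair of finite-dimensional polyhedral spaces. (This is visible even more directly: write $T=f\otimes w$, decompose $f$ over $Ext(B_{\mathbb{X}^*})$ and $w$ over $Ext(B_\mathbb{Y})$, and tensor the two convex decompositions; each $\phi_j\otimes w_k$ is a rank-one extreme contraction by Theorem~\ref{rank1}.) The paper's route is shorter for the theorem as stated; yours buys a strictly stronger conclusion.
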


\begin{proof}
		Let $T \in S_{\mathbb{L}(\mathbb{X}, \mathbb{Y})}$  be a rank $1$ linear operator and  let $v \in M_T \cap Ext(B_{\mathbb{X}}) $.  Suppose that $ Tv=w= c_1w_1+ c_2w_2+ \ldots+ c_rw_r, $ where $w_1, w_2, \ldots, w_r$ are extreme points of the polyhedron $B_{\mathbb{Y}}$ and $ c_1, c_2, \ldots, c_r \in [0, 1]$ such that $ \sum_{i=1}^{r} c_i =1.$ Let $ \{z_2, z_3, \ldots, z_n \} \subset Ker T \cap S_{\mathbb{X}}$ be a linearly independent set.  It is easy to observe that $ z_2, z_3, \ldots, z_n \in v^\perp \cap S_{\mathbb{X}}.$ From Step $1$ in the proof of  \cite[Th. 2.1]{RS}, we conclude that there are at least  $n$-number of facets of $B_{\mathbb{X}}$ containing $v.$ Suppose that $F_1, F_2, \ldots, F_n$ are the facets of $ B_{\mathbb{X}}$ such that $ v \in \cap_{i=1}^nF_i  .$   For  $ 1\leq i \leq r, 1\leq j \leq n,$ define $T_{ij} \in \mathbb{L} (\mathbb{X}, \mathbb{Y})$ as  $ T_{ij}(x)= w_i,$ for any $ x \in F_j.$ From Theorem \ref{rank1}, it follows that $T_{ij}$  are extreme contractions of 
		rank $1$, for all $1 \leq i \leq r, 1 \leq j \leq n.$  Let $ T_{ij}z_k= d_{ij}^kw_i,$  where $d_{ij}^k \in [-1,1],$ for any $ 1 \leq i \leq r, 1\leq j \leq n, 2 \leq k \leq n .$  We claim that $T$ is a convex combination of $rn$ extreme contractions $T_{ij}, 1 \leq i \leq r, 1 \leq j \leq n.$
 To establish our claim, we consider the following $r$ number of systems of linear equations in $nr$ variables $\alpha_{ij}$ 
 	\begin{eqnarray}\label{eqn}
 		\begin{aligned}
 				&&\alpha_{i1} + \alpha_{i2} + \ldots + \alpha_{in} = 1\\
 			d_{i1}^2 \alpha_{i1}&+& d_{i2}^2 \alpha_{i2} + \ldots + d_{in}^2 \alpha_{in} = 0\\
 			d_{i1}^3\alpha_{i1}&+& d_{i2}^3\alpha_{i2} + \ldots + d_{in}^3\alpha_{in} = 0\\
 			&.&\\
 			&.&\\
 			&.&\\
 			d_{i1}^n\alpha_{i1} &+& d_{i2}^n\alpha_{i2} + \ldots + d_{in}^n\alpha_{in} = 0,
 		\end{aligned}
  \end{eqnarray}
for any $1 \leq i \leq r.$
For each $k=1,2,\ldots,r,$ consider the matrix  $ P_k= (p_{ij}^k)_{1 \leq i,j \leq n},$ where  $ p_{1j}^k=1,$ for $  1 \leq j \leq n $ and $p_{ij}^k= d_{kj}^i,$ for  $ 2 \leq i \leq n, 1\leq j \leq n.$  Observe that  $ T_{kj}z_i= d_{kj}^i T_{kj}v $ and so  clearly for each $ k,~ 1\leq k \leq r,$ $ P_k$ is a Krein-Milman companion matrix.  From the hypothesis for any $ \widetilde{y}=(y_1, y_2, \ldots, y_n)\in \mathbb{R}^n,$ with $ y_1 <0,$ we have  $P_k^t\widetilde{y}^t \ngeqq 0.$ Therefore, by using  Lemma \ref{farkas}, the system of linear equations (\ref{eqn}) has a positive solution.  In other words, there exist  $nr$ real numbers $ \alpha _{ij} \in [0, 1]$ satisfying the r number of system of linear equations  (\ref{eqn}). 
Then  for any $1 \leq i \leq r,$ we have 
	\begin{eqnarray*}
	\begin{aligned}
		(\alpha_{i1}T_{i1}&+& \alpha_{i2}T_{i2} + \ldots + \alpha_{in} T_{in}) v= w_i\\
		(\alpha_{i1}T_{i1}&+& \alpha_{i2}T_{i2} + \ldots + \alpha_{in} T_{in}) z_2= 0\\
		(\alpha_{i1}T_{i1}&+& \alpha_{i2}T_{i2} + \ldots + \alpha_{in} T_{in}) z_3= 0\\
		&.&\\
		&.&\\
		(\alpha_{i1}T_{i1}&+& \alpha_{i2}T_{i2} + \ldots + \alpha_{in} T_{in}) z_n= 0.
	\end{aligned}
\end{eqnarray*}
This implies
\begin{eqnarray*}
	Tv= w & = & c_1w_1+ c_2w_2+ \ldots+ c_rw_r = \sum_{i=1} ^r c_i \Big (\sum_{j=1}^n \alpha_{ij}T_{ij}v\Big )\\
	\mbox{and for each }  k,~ 2 \leq k \leq n, & & \\
Tz_k= 0 & = & c_1w_1+ c_2w_2+ \ldots+ c_rw_r = \sum_{i=1} ^r c_i \Big (\sum_{j}^n \alpha_{ij}T_{ij}z_k\Big ).
	%&=&  c_1( a_{11}T_{11}+ a_{12}T_{12}+ \ldots+ a_{1n} T_{1n}) v \\ &+& c_2( a_{21}T_{21}+ a_{22}T_{22}+ \ldots+ a_{2n} T_{2n}) v\\ & +&  \ldots  \\ &+& c_r( a_{r1}T_{r1}+ a_{r2}T_{r2}+ \ldots+ a_{rn} T_{rn}) v
\end{eqnarray*} 
Therefore, 
\begin{eqnarray*}
	 T  =   \sum_{i=1}^r \sum_{j=1}^n c_i \alpha_{ij}T_{ij} 
%	T   & =&  c_1a_{11}T_{11}+ c_1a_{12}T_{12}+ \ldots+ c_1a_{1n} T_{1n}  \\ &+& c_2 a_{21}T_{21}+ c_2a_{22}T_{22}+ \ldots+ c_2 a_{2n} T_{2n} \\ & +&  \ldots  \\ &+& c_r a_{r1}T_{r1}+ c_ra_{r2}T_{r2}+ \ldots+ c_ra_{rn} T_{rn} .
\end{eqnarray*}
with 
\[\sum_{i=1}^{r} [c_i (\alpha_{i1}+  \alpha_{i2}+ \ldots+ \alpha_{in}) ]= c_1+ c_2 +\ldots+ c_r=1.\]
This completes the theorem. 
 \end{proof}

As a consequence of the previous theorem, we have the following nice outcome whenever the domain is two-dimensional.

\begin{corollary} \label{dim2}
	 	Let $\mathbb{X}$ be a two-dimensional polyhedral Banach space and let $\mathbb{Y}$ be a finite-dimensional polyhedral Banach space. Then the pair $(\mathbb{X}, \mathbb{Y})$ satisfies the rank invariant Krein-Milman Property of order $1$.
\end{corollary}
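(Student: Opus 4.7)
The plan is to apply Theorem \ref{companion}. Since $\dim(\mathbb{X})=2$, every Krein--Milman companion matrix $A$ is $2\times 2$ with first row $(1,1)$; combining Theorem \ref{rank1} with the defining data (a vertex $v$ of $B_{\mathbb{X}}$, two rank one extreme contractions with $T_1 v = T_2 v$, and a unit vector $z_2 \in v^\perp$), each $T_j$ necessarily has the form $T_j(x) = f_j(x)\, w$, where $w = T_j v \in Ext(B_{\mathbb{Y}})$ and $f_j \in S_{\mathbb{X}^*}$ is the supporting functional of a facet (edge) of $B_{\mathbb{X}}$ through $v$. In two dimensions there are exactly two such edges at $v$, and the canonical companion matrices produced inside the proof of Theorem \ref{companion} use one $T_j$ per edge, so we may take $f_1 \neq f_2$. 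The lower entries are then $a_{2j} = f_j(z_2)$, giving
\[
A \;=\; \begin{pmatrix} 1 & 1 \\ f_1(z_2) & f_2(z_2) \end{pmatrix}.
\]

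The crux is a sign analysis of the pair $(f_1(z_2), f_2(z_2))$. The set of supporting functionals at the vertex $v$ of the two-dimensional polyhedron $B_{\mathbb{X}}$ equals the segment $\{(1-t)f_1 + t f_2 : t \in [0,1]\}$. Birkhoff--James orthogonality $v \perp_B z_2$, together with James' characterization, yields some $t \in [0,1]$ with $(1-t)f_1(z_2) + t f_2(z_2) = 0$. Hence either exactly one of $f_1(z_2), f_2(z_2)$ vanishes (when $t \in \{0,1\}$) or the two have strictly opposite signs (when $t \in (0,1)$); they cannot both vanish, since $f_1$ and $f_2$ are linearly independent in the two-dimensional space $\mathbb{X}^*$.

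The Farkas-type check is then immediate. Given $\widetilde{y} = (y_1,y_2) \in \mathbb{R}^2$ with $y_1 < 0$, assume for contradiction that $A^t \widetilde{y}^t \geq 0$, i.e.\ $y_1 + f_j(z_2)\,y_2 \geq 0$ for $j = 1,2$. If some $f_j(z_2) = 0$ the corresponding inequality reads $y_1 \geq 0$, absurd; and if $f_1(z_2), f_2(z_2)$ have opposite signs, the two inequalities force $y_2$ to be simultaneously positive and negative, also absurd. Therefore $A^t \widetilde{y}^t \ngeqq 0$, the hypothesis of Theorem \ref{companion} is satisfied, and the pair $(\mathbb{X},\mathbb{Y})$ enjoys the rank invariant Krein--Milman property of order $1$. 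The main obstacle is the structural fact in the second paragraph: using Birkhoff--James orthogonality to pin down the signs of $f_j(z_2)$ via the two generators of the normal cone at $v$; once this is in place the verification becomes a short two-case sign argument.
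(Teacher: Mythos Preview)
Your proposal is correct and follows essentially the same route as the paper's proof. Both arguments apply Theorem \ref{companion}, write the rank one extreme contractions as $T_j(x)=f_j(x)\,w$ with $f_1,f_2$ the supporting functionals of the two edges of $B_{\mathbb{X}}$ meeting at $v$, use that every supporting functional at $v$ lies in the segment $[f_1,f_2]$ together with $v\perp_B z_2$ to force $f_1(z_2)f_2(z_2)\le 0$, and then finish with the two-line Farkas verification. Your use of James' characterization to obtain $(1-t)f_1(z_2)+tf_2(z_2)=0$ directly is exactly the paper's contrapositive argument (``if $f_1(z_2),f_2(z_2)>0$ then no supporting functional can annihilate $z_2$'') rephrased positively, and your observation that $f_1,f_2$ cannot both vanish on $z_2$ by linear independence slightly sharpens the paper's conclusion $a_{21}a_{22}\le 0$. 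Your remark that one may take $f_1\neq f_2$ because the companion matrices actually arising in the proof of Theorem \ref{companion} use one $T_j$ per facet is a point the paper simply asserts (``two distinct rank $1$ extreme contractions'') without comment; you are right to flag it.
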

\begin{proof}
	
Let $ A =(a_{ij})_{1 \leq i,j \leq 2} $ be a Krein-Milman companion matrix. 
Then $ a_{11} = a_{12}=1 .$  Suppose that $ T_1, T_2$ are two distinct rank $1$ extreme contraction in $ \mathbb{L}(\mathbb{X}, \mathbb{Y})$  such that  $ v \in M_{T_1} \cap M_{T_2} \cap Ext( B_{\mathbb{X}})$ and $ T_1z = a_{21}T_1v $ and $ T_2z= a_{22}T_2v,$ for some $ z \in v^\perp \cap S_{\mathbb{X}}$. Moreover, $T_1v=T_2v.$ Let $ E_1, E_2$ be the edges of $B_{\mathbb{X}}$ such that $ v \in E_1 \cap E_2.$   From Theorem \ref{rank1}, it is easy to observe that $ M_{T_{1}}=  E_1 \cup (-E_1)$ and $  M_{T_{2}}= E_2 \cup (-E_2).$ Then there exist two linear functionals $f_1, f_2 \in S_{\mathbb{X}^*}$ such that $T_1(x)= f_1(x)T_1v$ and $T_2(x)= f_2(x) T_2(v).$ Accordingly, the space $\mathbb{X}$ is divided into three mutually disjoint subsets as $H_1^+=\{x \in \mathbb{X}: f_1(x)>0\},$ $H_1^-=\{ x \in \mathbb{X}: f_1(x)< 0\}$ and $H_1= \{ x\in \mathbb{X}: f(x)=0)\}.$ Similarly, for the functional $f_2,$ we  get another decomposition of $ \mathbb{X} $ into three mutually disjoint subsets $H_2^+, H_2^-, H_2,$ defined accordingly. Clearly, $f_1$ and $f_2$ are  the supporting functionals corresponding to the facets $E_1$ and $E_2$ of $B_{\mathbb{X}},$ respectively and so  $f_1, f_2 \in Ext(B_{\mathbb{X}^*})$. It is easy to observe that any supporting functional at $v$ can be written as a convex combination of $f_1$ and $f_2.$
 We claim that for any  $ z \in v^\perp \cap S_{\mathbb{X}}$, $ z \in ( H_1^{+} \cap H_2^{-} )\cup ( H_1^{-} \cap H_2^{+}).$ Suppose on the contrary that $v \perp_B z$ but $z \in H_1^+ \cap H_2^+.$ Then $f_1(z), f_2(z)> 0.$ Since $v \perp_B z,$ there exists $g \in S_{\mathbb{X}^*}$ such that $g(v)=1$ and $g(z)=0.$ Now, $g= (1-t) f_1+ tf_2,$ where $t\in (0,1).$ Then $g(z)= (1-t) f_1(z)+ tf_2(z) \implies (1-t)f_1(z)+ t f_2(z)=0.$ That contradicts that $f_1(z), f_2(z) > 0.$ Thus, $z \notin H_1^+ \cap H_2^+.$ Similarly, we can show that $z \notin H_1^- \cap H_2^-.$ This establishes that  $v^\perp \subset  ( H_1^{+} \cap H_2^{-} )\cup ( H_1^{-} \cap H_2^{+}) \cup H_1 \cup H_2. $
 As a result, $f_1(z)f_2(z) \leq 0,$ and consequently $a_{21} a_{22} \leq 0.$
 % As a result, $f_1(z)$ and $f_2(z)$ do not have the same sign, and consequently  $a_{21} $ and $a_{22}$ do not have the same sign.
  Therefore, any Krein-Milman companion matrix $A$ is of the form 
$ 
\begin{pmatrix}
	1 & 1\\
	a_{21}& a_{22}
\end{pmatrix},$ where $a_{21} a_{22} \leq 0.$
Now for any $ \widetilde{y}= (y_1, y_2) \in \mathbb{R}^2$ such that $y_1 <0,$ 
\[
A^t\widetilde{y}^t=\begin{pmatrix}
	1 & a_{21}\\
	1& a_{22}
\end{pmatrix}
\begin{pmatrix}
	y_1\\
	y_2
\end{pmatrix}=
\begin{pmatrix}
	y_1 + a_{21}y_2\\
	y_1+ a_{22}y_2
\end{pmatrix}.
\]
Since $y_1 <0$ and $ a_{21} a_{22} \leq 0$ , it is evident that $ A^t\widetilde{y}^t \ngeqq 0.$ Therefore, the desired result follows immediately from Theorem \ref{companion}.
\end{proof}

\begin{remark}
It follows easily from   Corollary \ref{dim2} that the pair  $ (\ell_{\infty}^2, \ell_{\infty}^2) $ satisfies the rank invariant  Krein-Milman Property of order $1.$  However the pair fails to satisfy the rank invariant  Krein-Milman Property of order $2.$  The extreme contractions  (see \cite[Th. 2.3]{MPD})  of rank $2$ on $\ell_{\infty}^2$ are of the form : 
 \[
  \pm A_1 = \pm \begin{pmatrix}
 1 & 0 \\
 0 & 1
 \end{pmatrix},
\pm A_2= \pm \begin{pmatrix}
 	1 & 0 \\
 	0 & -1
 \end{pmatrix},
\pm A_3= \pm \begin{pmatrix}
	0 & 1 \\
	1 & 0
\end{pmatrix},
\pm A_4= \pm \begin{pmatrix}
	0 & 1 \\
	-1 & 0
\end{pmatrix}.
\]
Let $T=\begin{pmatrix}
	1 & 0\\
	\frac{1}{2} & \frac{1}{2}
\end{pmatrix}  \in S_{\ell_{\infty}^2}.$ Clearly, $rank ~T=2.$  We show  that $T$ cannot be written as a convex combination of rank  $2$ extreme contractions.  Suppose on the contrary that 
\[T =c_1 A_1 + c_2 (-A_1)+ c_3 A_2 + c_4 (-A_2)+ c_5 A_3 + c_6 (-A_3) + c_7 A_4 + c_8 (-A_4) ,\]
where $\sum_{i=1}^{8} c_i=1$ and $0 \leq c_i \leq 1.$ By  a straightforward calculation it can be shown that $c_1 -c_2+ c_3- c_4=1$ and $ c_5 - c_6 -c_7+c_8= \frac{1}{2}.$ This contradicts that  $\sum_{i=1}^{8} c_i=1$ and $0 \leq c_i \leq 1.$
 Therefore, $ (\ell_{\infty}^2, \ell_{\infty}^2) $ does not satisfy the rank invariant Krein-Milman Property of order $2.$ In this context, also observe that $ T=\frac{1}{2} A_1 + \frac{1}{2}B ,$ where $B= \begin{pmatrix}
 	1 & 0\\
 	1 & 0
 \end{pmatrix}$  is a rank $1$ extreme contraction. 
     
\end{remark}

\section*{Section-II}
The purpose of this section is to illustrate that the rank invariant Krein-Milman Property of order $ 1 $ may be satisfied in case of complex Banach spaces as well.
 Our principal objective is to show that   the pair  $(\ell_{\infty}^n, \ell_1^n)$ satisfies the said property.  We will use the notations $ \|.\|_\infty, \|.\|_1, \|.\|_{\infty, 1}, $ to denote the norms on $ \ell_{\infty}^n, \ell_{\infty}^n, \mathbb{L}(\ell_{\infty}^n, \ell_1^n), $ respectively. All the spaces considered in this section are over the complex field $\mathbb{C}.$

 We  would like to begin with a characterization of the rank $1$ operators and the rank $1$ extreme contractions in $\mathbb{L}(\ell_{\infty}^n, \ell_1^n).$ We require the following immediate description of extreme points of $ B_{\ell_{\infty}^n} $ and $ B_{\ell_1^n}. $

\begin{proposition}\label{proposition:extreme points}
	Let	$ \widetilde{u} = (u_1,u_2,\ldots,u_n) \in B_{\ell_{\infty}^n}. $ Then $ \widetilde{u} \in Ext(B_{\ell_{\infty}^n})$ if and only if $ |u_i|=1 $ for each $ i=1,2,\ldots,n. $ 
 On the other hand, $ \widetilde{w} = (w_1,w_2,\ldots,w_n) \in Ext(B_{\ell_1^n}) $ if and only if there exists $  i_0 \in \{1,2,\ldots,n\} $ such that $ |w_{i_{0}}| = 1 $ and $ w_{i} = 0 $ for each $ i \in \{1,2,\ldots,n\} \setminus \{ i_{0} \}. $ 
\end{proposition}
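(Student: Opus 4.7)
The proposition has two separate ``if and only if'' statements, one for each space, and I would treat them independently. In each case the strategy is the standard one: for the ``only if'' direction, exhibit an explicit convex decomposition whenever the stated condition fails; for the ``if'' direction, assume the point is written as the midpoint of two points in the unit ball and deduce equality coordinatewise, using strict convexity of the closed unit disc in $\mathbb{C}$ as the key geometric input.

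For $\ell_\infty^n$, the first step is: if some $|u_{i_0}| < 1$, set $\widetilde{u}^\pm$ equal to $\widetilde{u}$ except in the $i_0$-th coordinate, which we replace by $u_{i_0} \pm \varepsilon$ for $\varepsilon > 0$ small enough that $|u_{i_0} \pm \varepsilon| \leq 1$. Then $\widetilde{u}^\pm \in B_{\ell_\infty^n}$, $\widetilde{u}^+ \neq \widetilde{u}^-$, and $\widetilde{u} = \tfrac{1}{2}(\widetilde{u}^+ + \widetilde{u}^-)$, which contradicts extremality. Conversely, assume $|u_i|=1$ for every $i$ and write $\widetilde{u} = \tfrac{1}{2}(\widetilde{v}+\widetilde{w})$ with $\widetilde{v},\widetilde{w} \in B_{\ell_\infty^n}$. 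Coordinatewise, $u_i = \tfrac{1}{2}(v_i + w_i)$ with $|v_i|,|w_i| \leq 1 = |u_i|$, so $u_i$ lies on the boundary of the closed unit disc and is the midpoint of two points in that disc; strict convexity of the Euclidean disc forces $v_i = w_i = u_i$. Hence $\widetilde{v}=\widetilde{w}=\widetilde{u}$.

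For $\ell_1^n$, if $\widetilde{w}$ has two nonzero coordinates $w_{i_1} = a\,e^{i\alpha}$ and $w_{i_2} = b\,e^{i\beta}$ with $a,b>0$, take $\widetilde{w}^\pm$ obtained from $\widetilde{w}$ by replacing these two coordinates with $(a\pm\varepsilon)e^{i\alpha}$ and $(b\mp\varepsilon)e^{i\beta}$, for $\varepsilon < \min\{a,b\}$. Since the moduli of the altered entries add to $a+b$, the $\ell_1$ norms of $\widetilde{w}^\pm$ equal $\|\widetilde{w}\|_1 \leq 1$, and $\widetilde{w} = \tfrac{1}{2}(\widetilde{w}^+ + \widetilde{w}^-)$, precluding extremality; likewise if $\|\widetilde{w}\|_1 < 1$ one perturbs a single nonzero coordinate within the disc. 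Conversely, assume $\widetilde{w} = e^{i\theta} e_{i_0}$ and write $\widetilde{w} = \tfrac{1}{2}(\widetilde{u}+\widetilde{v})$ with $\widetilde{u},\widetilde{v} \in B_{\ell_1^n}$. For indices $i \neq i_0$, $u_i = -v_i$, so $|u_i| = |v_i|$. For the $i_0$-th coordinate, $|u_{i_0}|,|v_{i_0}| \leq 1$ together with $e^{i\theta} = \tfrac{1}{2}(u_{i_0}+v_{i_0})$ and strict convexity of the complex unit disc force $u_{i_0}=v_{i_0}=e^{i\theta}$. Then $1 = |u_{i_0}| \leq \|\widetilde{u}\|_1 \leq 1$ yields $u_i = 0$ for $i \neq i_0$, and similarly for $\widetilde{v}$, giving $\widetilde{u}=\widetilde{v}=\widetilde{w}$.

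There is no substantive obstacle here: the proof is elementary and rests only on the strict convexity of the complex unit disc. The only point that deserves a moment of care, and which I would make sure to state explicitly, is that strict convexity is exactly what makes the argument go through over $\mathbb{C}$ just as it would over $\mathbb{R}$, since it prevents two points of the closed disc from having their midpoint on the boundary unless they coincide with it.
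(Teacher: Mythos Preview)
Your argument is correct and entirely standard. The paper itself does not supply a proof of this proposition: it is introduced as an ``immediate description'' of the extreme points and left unproved, so there is nothing to compare against beyond observing that your write-up is exactly the elementary verification one would expect. The only negligible wrinkle is the sentence ``if $\|\widetilde{w}\|_1 < 1$ one perturbs a single nonzero coordinate,'' which as phrased does not literally cover $\widetilde{w}=0$; but of course $\widetilde{w}^\pm = \widetilde{w} \pm \varepsilon e_1$ works uniformly in that case, so this is cosmetic rather than a gap.
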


We next introduce a simple notation that will be particularly useful for our purpose. Let $ \widetilde{u} = (u_1,u_2,\ldots,u_n) \in Ext(B_{\ell_{\infty}^n}). $ We write $ \widetilde{u_1} = \widetilde{u}. $ For each $ i \in \{ 2,3,\ldots,n \}, $ we define $ \widetilde{u_i} = (u_1, u_2, \ldots \- ,  u_{i-1}, -u_i, u_{i+1}, $ $ \ldots, u_n). $ In other words, whenever $ i \in \{ 2,3,\ldots,n \}, $ $ \widetilde{u_i} $ differs from $ \widetilde{u} = \widetilde{u_1} $ only in the $ i- $th coordinate by a factor $ -1. $  It is easy to observe that $\{ \widetilde{u}, \widetilde{u_2}, \ldots, \widetilde{u_n} \}$ is a basis of $ \ell_{\infty}^n.$ We are now in a position to characterize the rank $ 1 $ operators on the unit sphere of $ \mathbb{L}(\ell_{\infty}^n, \ell_1^n). $

\begin{theorem}\label{theorem:rank $1$}
	Let $ T \in \mathbb{L}(\ell_{\infty}^n, \ell_1^n) $ be rank $ 1. $ Then $ \| T \|_{\infty,1} = 1 $ if and only if there exists $ \widetilde{u} = (u_1,u_2,\ldots,u_n) \in Ext(B_{\ell_{\infty}^n}), $ $ \widetilde{w} = (w_1,w_2,\ldots,w_n) \in S_{\ell_1^n} $ and $ \kappa_j \in [-1,1] $ such that for each $ j \in \{ 1,2,\ldots,n \}, $ $ T(\widetilde{u_j}) = \kappa_j \widetilde{w}, $ where $ \kappa_1 = 1 $ and $ \sum\limits_{j=1}^n \kappa_j \geq (n-2).  $
\end{theorem}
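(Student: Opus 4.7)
The plan is to reduce the problem to a scalar calculation using the basis $\{\widetilde{u_1}, \widetilde{u_2}, \ldots, \widetilde{u_n}\}$ of $\ell_{\infty}^n$. For the necessity direction, I first observe that since $\widetilde{x} \mapsto \|T\widetilde{x}\|_1$ is a convex function on the compact convex set $B_{\ell_{\infty}^n}$, the norm $\|T\|_{\infty,1}$ is attained at some extreme point of $B_{\ell_\infty^n}$. Using Proposition \ref{proposition:extreme points} to pick such an extreme point and labeling it $\widetilde{u} = \widetilde{u_1}$, I set $\widetilde{w} = T\widetilde{u_1} \in S_{\ell_1^n}$. The rank one hypothesis then forces $T\widetilde{u_j} = \kappa_j \widetilde{w}$ for each $j$, with $\kappa_1 = 1$ and $|\kappa_j| \leq 1$, since each $\widetilde{u_j}$ has unit $\ell_\infty$ norm.

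The heart of the argument is a change-of-basis computation. For a generic $\widetilde{x} = (x_1, \ldots, x_n) \in \ell_{\infty}^n$, set $y_i = x_i / u_i$ (so $|y_i|=|x_i|$, using $|u_i|=1$) and expand $\widetilde{x} = \sum_j c_j \widetilde{u_j}$. Solving the resulting linear system yields $c_i = (y_1 - y_i)/2$ for $i \geq 2$ and $c_1 = \tfrac{3-n}{2}\, y_1 + \tfrac{1}{2} \sum_{j \geq 2} y_j$, whence
\[
T\widetilde{x} \;=\; \Big(\sum_{j=1}^n \alpha_j\, y_j\Big)\, \widetilde{w}, \qquad \alpha_1 = \tfrac{1}{2}\Big[(3-n) + \sum_{j=2}^n \kappa_j\Big], \quad \alpha_j = \tfrac{1}{2}(1-\kappa_j)\ (j \geq 2).
\]
Summation gives the algebraic identity $\sum_{j=1}^n \alpha_j = 1$, encoding $T\widetilde{u_1} = \widetilde{w}$. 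Since $\widetilde{x} \in B_{\ell_{\infty}^n}$ iff $|y_i| \leq 1$ for all $i$, the operator norm works out to $\|T\|_{\infty,1} = \sup_{|y_j| \leq 1} \big|\sum_j \alpha_j y_j\big| = \sum_{j=1}^n |\alpha_j|$.

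Imposing $\|T\|_{\infty,1} = 1$ thus forces $\sum_j |\alpha_j| = 1 = \big|\sum_j \alpha_j\big|$, which is the equality case of the triangle inequality in $\mathbb{C}$; since $\sum_j \alpha_j$ equals the positive real number $1$, every $\alpha_j$ must be a non-negative real. Reading off the definitions shows $\kappa_j \in \mathbb{R}$ with $\kappa_j \leq 1$ for $j \geq 2$ (hence $\kappa_j \in [-1,1]$, given $|\kappa_j|\leq 1$), while $\alpha_1 \geq 0$ rearranges to $\sum_{j=1}^n \kappa_j \geq n-2$. The converse direction is immediate: defining $T$ on the basis by $T\widetilde{u_j} = \kappa_j \widetilde{w}$, the hypotheses on the $\kappa_j$ make every $\alpha_j \geq 0$, so $\|T\|_{\infty,1} = \sum_j \alpha_j = 1$, and rank one is clear since $\widetilde{w}\neq 0$. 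The only real technical hurdle is the change-of-coordinates computation producing the explicit $\alpha_j$'s; once those are in hand, both the reality of $\kappa_j$ in the complex setting and the constraint $\sum_j \kappa_j \geq n-2$ drop out from the equality case of the triangle inequality.
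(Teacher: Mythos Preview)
Your argument is correct and follows the same skeleton as the paper's proof: both use the basis $\{\widetilde{u_j}\}$, carry out the identical change-of-coordinates computation, and identify the same linear functional $\widetilde{y}\mapsto\sum_j\alpha_j y_j$ with $\alpha_1=\tfrac12\big[(3-n)+\sum_{j\ge2}\kappa_j\big]$ and $\alpha_j=\tfrac12(1-\kappa_j)$ for $j\ge2$. The difference lies in the endgame. The paper chooses a particular unimodular $\widetilde{z}$ to realise $\|T\widetilde{z}\|_1=\sum_j|\alpha_j|$, then derives $1\ge\sum_j|\alpha_j|\ge|\sum_j\alpha_j|=1$ and analyses the equality case of the triangle inequality through an auxiliary system $1-\kappa_j=m_j(1-\kappa_2)$, $m_j\ge0$, eventually forcing $(1-\kappa_2)\big(1+\sum_{j\ge3}m_j\big)=2$ and hence $\kappa_2\in\mathbb{R}$. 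You instead observe directly that $\|T\|_{\infty,1}=\sup_{|y_j|\le1}\big|\sum_j\alpha_j y_j\big|=\sum_j|\alpha_j|$, and combine this with the algebraic identity $\sum_j\alpha_j=1$ to get $\sum_j|\alpha_j|=\big|\sum_j\alpha_j\big|$; since the common value is the positive real number $1$, every $\alpha_j$ must be a non-negative real, which immediately yields $\kappa_j\in\mathbb{R}$, $\kappa_j\le1$, and $\sum_j\kappa_j\ge n-2$. Your route is shorter and avoids the case split on whether some $\kappa_j$ equals $1$, at the cost of invoking the (standard) fact that the dual norm of $\ell_\infty^n$ is $\ell_1^n$ over $\mathbb{C}$; what you gain is a one-line passage from $\|T\|_{\infty,1}=1$ to the full list of constraints on the $\kappa_j$.
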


\begin{proof}
	Let us first prove the  necessary part of the theorem. Let $ T \in \mathbb{L}(\ell_{\infty}^n, \ell_1^n) $ be a rank $ 1 $ linear operator such that $ \| T \|_{\infty,1} = 1. $ Clearly, $ T $ attains norm at some extreme point $ \widetilde{u} = (u_1,u_2,\ldots,u_n) \in Ext(B_{\ell_{\infty}^n}). $ Let $ T(\widetilde{u}) = \widetilde{w} = (w_1,w_2,\ldots,w_n) \in S_{\ell_1^n}. $ Since $ T $ is rank $ 1, $ there exists $ \kappa_j \in \mathbb{C} $ such that $ T(\widetilde{u_j}) = \kappa_j \widetilde{w} $ for each $ j \in \{ 2,3,\ldots,n \}. $ We claim that each $ \kappa_j $ is real. To prove our claim, we first observe that $ \{ \widetilde{u_{j}}~:j=1,2,\ldots,n \} $ is a basis of $ \ell_{\infty}^n. $ Therefore, any $ \widetilde{z} = (z_1,z_2,\ldots,z_n) \in \ell_{\infty}^n  $ can be uniquely written as $  \widetilde{z} = \sum\limits_{j=1}^n \alpha_j \widetilde{u_j}, $ where $ \alpha_j $ are scalars. On explicit computation, we obtain the following expressions for $ \alpha_j : $ \\
	
	\noindent $ \alpha_1 = \frac{3-n}{2} \frac{z_1}{u_1} + \frac{1}{2} \sum\limits_{j=2}^n \frac{z_j}{u_j}, $\\
	
	\noindent $ \alpha_j = \frac{1}{2} ( \frac{z_1}{u_1} - \frac{z_j}{u_j} ), $
	for each $ j \in \{ 2,3,\ldots,n \}. $\\
	\noindent Applying the linearity of $ T, $ we have, $ T(\widetilde{z}) = T(z_1,z_2,\ldots,z_n) =  \sum\limits_{j=1}^n \alpha_j T(\widetilde{u_j}). $ Writing the values of $ \alpha_j $ and $ T(\widetilde{u_j}), $ we obtain the following:
	\[ T(z_1,z_2,\ldots,z_n) = l (w_1,w_2,\ldots,w_n), \text{where}~ l = (\frac{3-n}{2} + \frac{1}{2} \sum\limits_{j=2}^n \kappa_j) \frac{z_1}{u_1} + \sum\limits_{j=2}^n (\frac{1}{2}-\frac{1}{2} \kappa_j) \frac{z_j}{u_j}. \]
	Since $ \| \widetilde{w} \|_{1} =1, $ this gives, $ \| T(\widetilde{z}) \|_1 = | l | = | (\frac{3-n}{2} + \frac{1}{2} \sum\limits_{j=2}^n \kappa_j) \frac{z_1}{u_1} + \sum\limits_{j=2}^n (\frac{1}{2}-\frac{1}{2} \kappa_j) \frac{z_j}{u_j} |. $ Let us observe that it is possible to choose unimodular $ z_1,z_2,\ldots,z_n \in \mathbb{C} $ such that $ (\frac{3-n}{2} + \frac{1}{2} \sum\limits_{j=2}^n \kappa_j) \frac{z_1}{u_1} = | \frac{3-n}{2} + \frac{1}{2} \sum\limits_{j=2}^n \kappa_j | $ and $ (\frac{1}{2}-\frac{1}{2} \kappa_j) \frac{z_j}{u_j} = | \frac{1}{2}-\frac{1}{2} \kappa_j | $ for each $ j \in \{ 2,3,\ldots,n \}. $ Therefore, with this particular choice of $ \widetilde{z}, $ we have, 
	
	\begin{align*}
		\| T \|_{\infty,1} & \geq \| T(\widetilde{z}) \|_1\\
		&  =   \frac{1}{2} | (3-n) + \sum\limits_{j=2}^n \kappa_j | + \frac{1}{2}\sum\limits_{j=2}^n | 1-\kappa_j |\\
		& \geq \frac{1}{2} | (3-n)+\sum\limits_{j=2}^n \kappa_j + \sum\limits_{j=2}^n (1-\kappa_j) |\\
		& = \frac{1}{2} | (3-n)+(n-1) |\\
		& = 1.             
	\end{align*}
	
	Since $ \| T \|_{\infty,1} = 1, $ it follows that each inequality in the above system of inequalities must be an equality. Moreover, we note that if  $ | 1 - \kappa_j | = 0 $ for each $ j \in \{ 2,3,\ldots,n \} $ then our claim is already proved. Without loss of generality, let us assume that $ 1 - \kappa_2 \neq 0. $ Therefore, by applying the equality condition of triangular inequality, it follows that there exist scalars $ m_3,m_4,\ldots,m_{n+1} \geq 0 $ such that for each $ j \in \{ 3,4,\ldots,n \}, $ $ 1-\kappa_j = m_j (1-\kappa_2) $ and $ (3-n) + \sum\limits_{j=2}^n \kappa_j = m_{n+1} (1-\kappa_2). $  It follows from these relations that 
	\[ (1-\kappa_2) (\sum\limits_{j=3}^{n+1} m_j + 1) = 2. \]
	
	\noindent Since each $ m_j \geq 0, $ this proves that $ 1-\kappa_2 \in \mathbb{R}, $ i.e., $ \kappa_2 \in \mathbb{R}. $ Consequently, it follows that $ \kappa_j \in \mathbb{R} $ for each $ j \in \{ 3,4,\ldots,n \}. $ This completes the proof of our claim. Moreover, since $ 1=\| T \|_{\infty,1} \geq \| T(\widetilde{u_j}) \| = | \kappa_j |, $ we deduce that $ \kappa_j \in [-1,1] $ for each $ j \in \{ 2,3,\ldots,n \}. $ We further note that since  $ (3-n) + \sum\limits_{j=2}^n \kappa_j = m_{n+1} (1-\kappa_2) \geq 0,  $ we must have, $ \sum\limits_{j=2}^n \kappa_j \geq n-3,  $ i.e., $ \sum\limits_{j=1}^n \kappa_j \geq n-2, $ where $ \kappa_1 = 1. $ This completes the proof of the necessary part of the theorem.
	
	On the other hand, if $ T \in \mathbb{L}(\ell_{\infty}^n, \ell_1^n) $ is such that  $ T $ satisfies the conditions stated in the theorem, then it follows from the above computations that for any $ \widetilde{z} \in S_{\ell_{\infty}^n}, $ we have, $ \| T(\widetilde{z}) \|_1 \leq 1 = \|T(\widetilde{u})\|_1 $ Therefore, $ \| T \|_{\infty,1} =1.  $ This completes the proof of the sufficient part of the theorem and establishes it completely. 
\end{proof}

We apply the above result to obtain a complete characterization of rank $ 1 $ extreme contractions in $ \mathbb{L}(\ell_{\infty}^n, \ell_1^n). $ 

\begin{theorem}\label{theorem:rank $1$ extreme contractions} 
	Let $ T \in \mathbb{L}(\ell_{\infty}^n, \ell_1^n) $ be rank $ 1. $ Then $ T $ is an extreme contraction in $ \mathbb{L}(\ell_{\infty}^n, \ell_1^n) $ if and only if there exists $ \widetilde{u} = (u_1,u_2,\ldots,u_n) \in Ext(B_{l_{\infty}^n}), $ $ \widetilde{w} = (w_1,w_2,\ldots,w_n) \in Ext(B_{\ell_1^n}) $ and $ \kappa_j \in \{\pm 1\}~ (j=2,3,\ldots,n) $ such that for each $ j \in \{ 1,2,\ldots,n \}, $ $ T(\widetilde{u_j}) = \kappa_j \widetilde{w}, $ where $ \kappa_1 = 1 $ and $ \sum\limits_{j=1}^n \kappa_j \geq (n-2).  $
\end{theorem}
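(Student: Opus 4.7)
The plan is to refine Theorem \ref{theorem:rank $1$} by tracking which rank one norm one operators in $\mathbb{L}(\ell_{\infty}^n,\ell_1^n)$ are extreme points of the unit ball. I will use throughout that $\{\widetilde{u_j}\}_{j=1}^n$ is a basis of $\ell_{\infty}^n$, so that $T$ is completely determined by the data $(\widetilde{u},\widetilde{w},\kappa_j)$ produced by that theorem.

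For the sufficient direction, assume $T$ satisfies the stated conditions, so $\|T\|_{\infty,1}=1$ by Theorem \ref{theorem:rank $1$}. Suppose $T = \tfrac{1}{2}(T_1+T_2)$ with $T_1,T_2\in S_{\mathbb{L}(\ell_{\infty}^n,\ell_1^n)}$. Since $\widetilde{u_j}\in Ext(B_{\ell_\infty^n})$ by Proposition \ref{proposition:extreme points}, one has $\|T_i(\widetilde{u_j})\|_1\leq 1$ for each $j$. On the other hand, $\kappa_j\widetilde{w}$ is an extreme point of $B_{\ell_1^n}$ because $\kappa_j=\pm 1$ and $\widetilde{w}\in Ext(B_{\ell_1^n})$. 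The midpoint identity $\kappa_j\widetilde{w}=\tfrac{1}{2}(T_1\widetilde{u_j}+T_2\widetilde{u_j})$ thus forces $T_1\widetilde{u_j}=T_2\widetilde{u_j}=\kappa_j\widetilde{w}$ for every $j$, and the basis property yields $T_1=T_2=T$.

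For the necessary direction, Theorem \ref{theorem:rank $1$} already supplies $\widetilde{u}\in Ext(B_{\ell_\infty^n})$, $\widetilde{w}\in S_{\ell_1^n}$ and $\kappa_j\in[-1,1]$ with $\kappa_1=1$, $\sum_j\kappa_j\geq n-2$, and $T(\widetilde{u_j})=\kappa_j\widetilde{w}$. It remains to upgrade $\widetilde{w}$ to an extreme point of $B_{\ell_1^n}$ and each $\kappa_j$ to $\pm 1$. If $\widetilde{w}=(1-\lambda)\widetilde{w}^{(1)}+\lambda\widetilde{w}^{(2)}$ is a non-trivial convex combination inside $S_{\ell_1^n}$, define $T_i$ by $T_i(\widetilde{u_j})=\kappa_j\widetilde{w}^{(i)}$; by Theorem \ref{theorem:rank $1$} each $T_i$ is a contraction, and $T=(1-\lambda)T_1+\lambda T_2$ contradicts the extremality of $T$ unless $\widetilde{w}^{(1)}=\widetilde{w}^{(2)}=\widetilde{w}$. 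Hence $\widetilde{w}\in Ext(B_{\ell_1^n})$.

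Now suppose some $\kappa_{j_0}\in(-1,1)$. If the inequality $\sum_j\kappa_j\geq n-2$ is strict, then perturbing only $\kappa_{j_0}$ by $\pm\epsilon$ for small $\epsilon>0$ keeps the perturbed data inside the hypotheses of Theorem \ref{theorem:rank $1$}, yielding distinct contractions $T_1,T_2$ with $T=\tfrac{1}{2}(T_1+T_2)$, contradicting extremality. If instead $\sum_j\kappa_j=n-2$ exactly, then $\sum_{j=2}^n(1-\kappa_j)=2$; since $1-\kappa_{j_0}\in(0,2)$ the remaining indices carry a deficit that is positive but strictly less than $2$, which forces the existence of $j_1\neq j_0$ with $\kappa_{j_1}\in(-1,1)$ (the strict upper bound on the remaining deficit rules out $\kappa_{j_1}=-1$). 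Perturbing $(\kappa_{j_0},\kappa_{j_1})$ by $(+\epsilon,-\epsilon)$ and by $(-\epsilon,+\epsilon)$ preserves the sum and keeps all entries in $(-1,1)$ for small $\epsilon$, producing via Theorem \ref{theorem:rank $1$} norm one operators $T_1\neq T_2$ with $T=\tfrac{1}{2}(T_1+T_2)$, a contradiction. The main obstacle is exactly this boundary case $\sum_j\kappa_j=n-2$: a single-coordinate perturbation would destroy the sum condition, and one must exploit the fact that the total \emph{deficit from $1$} equals exactly $2$ to locate a second interior coordinate to couple with $\kappa_{j_0}$.
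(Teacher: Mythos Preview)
Your proof is correct and follows the same overall strategy as the paper: both directions rest on Theorem \ref{theorem:rank $1$}, the sufficiency and the extremality of $\widetilde{w}$ are argued identically, and the remaining step (forcing each $\kappa_j$ into $\{\pm 1\}$) is in both cases a perturbation of the $\kappa$-data within the constraint set described by Theorem \ref{theorem:rank $1$}. The only difference is tactical: the paper fixes a pair $(\kappa_2,\kappa_3)$ with $\kappa_2,\kappa_3\neq -1$, pushes one coordinate all the way to $1$ and compensates on the other (using the observation $\kappa_{j_1}+\kappa_{j_2}\geq 0$), whereas you split into the slack case $\sum_j\kappa_j>n-2$ and the tight case $\sum_j\kappa_j=n-2$ and use small symmetric $\pm\epsilon$ perturbations; your version has the minor advantage of transparently handling the boundary situation where exactly one $\kappa_{j_0}$ lies in $(-1,1)$ and all others equal $1$.
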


\begin{proof}
	We first prove the necessary part of the theorem. Let $ T \in \mathbb{L}(\ell_{\infty}^n, \ell_1^n) $ be a rank $ 1 $ extreme contraction. Clearly, $ \| T \|_{\infty,1} = 1. $ Therefore, it follows from Theorem \ref{theorem:rank $1$} that there exists $ \widetilde{u} = (u_1,u_2,\ldots,u_n) \in Ext(B_{\ell_{\infty}^n}), $ $ \widetilde{w} = (w_1, w_2, \ldots, w_n) \in S_{\ell_1^n} $ and $ \kappa_j \in [-1,1]~ (j=2,3,\ldots,n) $ such that for each $ j \in \{ 1,2,\ldots,n \}, $ $ T(\widetilde{u_j}) = \kappa_j \widetilde{w}, $ where $ \kappa_1 = 1 $ and $ \sum\limits_{j=1}^n \kappa_j \geq (n-2).  $ 
		Our first claim is that $ \widetilde{w} \in Ext(B_{\ell_1^n}). $ Suppose this is not true. Then there exists $ \widetilde{w_1},\widetilde{w_2} \in S_{\ell_1^n} \setminus \widetilde{w} $ and $ t_1 \in (0,1) $ such that $ \widetilde{w} = (1-t_1) \widetilde{w_1} + t_1 \widetilde{w_2}. $ Let us consider two linear operators $ T_1,T_2 \in \mathbb{L}(\ell_{\infty}^n,\ell_{1}^n) $ defined in the following way:
	\[ T_1(\widetilde{u_j}) = \kappa_j \widetilde{w_1} \qquad \qquad \qquad \qquad \qquad T_2(\widetilde{u_j}) = \kappa_j \widetilde{w_2} \quad (j=1,2,\ldots,n). \]
	\\
	Clearly, $ T_1,T_2 \neq T = (1-t_1)T_1 + t_1 T_2. $ Moreover, Theorem \ref{theorem:rank $1$} ensures that $ \|T_1\|_{\infty,1} = \|T_2\|_{\infty,1} = 1. $ However, this contradicts our assumption that $ T $ is an extreme contraction. This completes the proof of our claim that $ \widetilde{w} \in Ext(B_{\ell_{1}^n}). $  Our next claim is that $ \kappa_j \in \{ \pm 1 \} $ for each $ j \in \{ 2,3,\ldots,n \}. $ We first observe that if there exists $ j_0 \in \{ 2,3,\ldots,n \} $ such that $ \kappa_{j_{0}} = -1 $ then for each $ j \in \{ 2,3,\ldots,n \} \setminus \{ j_0 \}, $ we must have that $ \kappa_j = 1. $ We further observe that since $ |\kappa_j| \leq 1 $ and $ \sum\limits_{j=2}^n \kappa_j \geq (n-3),  $ therefore, given any two distinct $ j_1,j_2 \in \{ 2,3,\ldots,n \}, $ we must have that $ \kappa_{j_{1}} + \kappa_{j_{2}} \geq 0. $ Now, if either $ \kappa_2 = -1 $ or $ \kappa_3 = -1 $ then our claim is established. Let us assume that $ \kappa_2,\kappa_3 \neq -1. $ We will show that $ \kappa_2 = \kappa_3 = 1. $ Let us choose $ \epsilon_2 = 1-\kappa_3 $ and $ \epsilon_3 = 1-\kappa_2. $ Clearly, $ \epsilon_2,\epsilon_3 \geq 0. $ Let us consider two linear operators $ A_1,A_2 \in \mathbb{L}(\ell_{\infty}^n,\ell_{1}^n) $ defined in the following way:\\
	
	\begin{align*}
		A_1(\widetilde{u_1}) & = \widetilde{w}    & A_2(\widetilde{u_1})  & =  \widetilde{w}\\
		A_1(\widetilde{u_2}) & = \widetilde{w}     & A_2(\widetilde{u_2}) & =  (\kappa_2 - \epsilon_2)\widetilde{w}\\
		A_1(\widetilde{u_3}) & = (\kappa_3-\epsilon_3)\widetilde{w}     & A_2(\widetilde{u_3}) & = \widetilde{w}\\
		A_1(\widetilde{u_j}) & = \kappa_j \widetilde{w}     & A_2(\widetilde{u_j}) & =  \kappa_j \widetilde{w} \qquad (4 \leq j \leq n).
	\end{align*}
	Since $ |\kappa_2|,|\kappa_3| \leq 1 $ and $ \kappa_2+\kappa_3 \geq 0, $ it follows that $ | \kappa_j - \epsilon_j | \leq 1 ~(j=2,3). $ Moreover, we have, $ 1+1+(\kappa_3-\epsilon_3)+\sum\limits_{j=4}^n \kappa_j = 1+(\kappa_2-\epsilon_2)+1+\sum\limits_{j=4}^n \kappa_j = \sum\limits_{j=1}^n \kappa_j \geq (n-2). $ Therefore, it follows from Theorem \ref{theorem:rank $1$} that $ \| A_1 \|_{\infty,1} = \| A_2 \|_{\infty,1} =1. $ We further observe that $ T = (1-t_0)A_1+t_0 A_2, $ where $ t_0 = \frac{1-\kappa_2}{(1-\kappa_2)+(1-\kappa_3)} \in [0,1]. $ Since $ T $ is an extreme contraction,  we must have $ A_1 = A_2 = T. $ However, this clearly implies that $ \epsilon_2=\epsilon_3=0, $ i.e., $ \kappa_2=\kappa_3=1. $ Using similar arguments, it can be shown that $ \kappa_j \in \{ \pm 1 \} $ for each $ j \in \{ 2,3,\ldots,n \}. $ This completes the proof of the necessary part of the theorem.
	
	Let us now prove the comparatively easier sufficient part of the theorem. Let $ T \in \mathbb{L}(\ell_{\infty}^n,\ell_{1}^n) $ satisfies all the conditions stated in the theorem. Theorem \ref{theorem:rank $1$} ensures that $ \| T \|_{\infty,1} = 1. $ Suppose there exists $ t \in [0,1] $ and $ A_3,A_4 \in \mathbb{L}(\ell_{\infty}^n,\ell_{1}^n), $ with $ \| A_3 \|_{\infty,1} = \| A_4 \|_{\infty,1} =1, $ such that $ T = (1-t)A_3+tA_4. $ Since $ \pm \widetilde{w} \in Ext(B_{\ell_{1}^n}), $ it is easy to show that $ A_3=A_4=T. $ This proves that $ T $ is an extreme contraction in  $ \mathbb{L}(\ell_{\infty}^n,\ell_{1}^n). $ This completes the proof of the sufficient part of the theorem and establishes it completely.  
\end{proof}

We next prove that the pair $(\ell_{\infty}^n,\ell_{1}^n)$ satisfies the rank invariant Krein-Milman Property of order $1,$ as promised. 

\begin{theorem}\label{theorem:convex combination}
	The pair $ (\ell_{\infty}^n,\ell_{1}^n) $ satisfies the rank invariant Krein-Milman Property of order $ 1, $ for every $ n \in \mathbb{N}. $
\end{theorem}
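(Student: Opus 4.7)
My plan is to combine Theorem \ref{theorem:rank $1$} and Theorem \ref{theorem:rank $1$ extreme contractions} with a two-stage convex decomposition. Let $T \in \mathbb{L}(\ell_\infty^n,\ell_1^n)$ be a rank $1$ norm $1$ operator. By Theorem \ref{theorem:rank $1$}, there exist $\widetilde{u}\in Ext(B_{\ell_\infty^n})$, $\widetilde{w}\in S_{\ell_1^n}$, and $\kappa_2,\ldots,\kappa_n\in[-1,1]$ with $\sum_{j=1}^{n}\kappa_j\geq n-2$ (with $\kappa_1=1$) such that $T(\widetilde{u_j})=\kappa_j\widetilde{w}$. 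My task is to write $T$ as a convex combination of operators of the type described in Theorem \ref{theorem:rank $1$ extreme contractions}.

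The first stage is to deform the $\kappa_j$'s to values in $\{\pm 1\}$. Setting $\epsilon_j=1-\kappa_j\geq 0$ for $j\geq 2$, the hypothesis $\sum_{j=1}^{n}\kappa_j\geq n-2$ translates into $\sum_{j=2}^{n}\epsilon_j\leq 2$. For each $j\in\{2,\ldots,n\}$, I introduce an auxiliary rank $1$ operator $T_j\in\mathbb{L}(\ell_\infty^n,\ell_1^n)$ defined by $T_j(\widetilde{u_j})=-\widetilde{w}$ and $T_j(\widetilde{u_k})=\widetilde{w}$ for $k\neq j$, and also $T_0$ defined by $T_0(\widetilde{u_k})=\widetilde{w}$ for every $k$. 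Each $T_j$ satisfies the hypotheses of Theorem \ref{theorem:rank $1$}, hence has norm $1$. Setting $\lambda_j=\epsilon_j/2$ for $j\geq 2$ and $\lambda_0=1-\sum_{j=2}^{n}\lambda_j\geq 0$, a coordinate-by-coordinate check on the basis $\{\widetilde{u_1},\ldots,\widetilde{u_n}\}$ gives
\[
T \;=\; \lambda_0 T_0 \;+\; \sum_{j=2}^{n} \lambda_j T_j.
\]

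The second stage is to refine the target vector. Since $B_{\ell_1^n}$ is a polyhedron, by the Krein--Milman theorem I can write $\widetilde{w}=\sum_{i=1}^{r}c_i\widetilde{w^{(i)}}$ with $\widetilde{w^{(i)}}\in Ext(B_{\ell_1^n})$, $c_i\geq 0$, and $\sum_{i}c_i=1$. For each $j\in\{0,2,\ldots,n\}$ and each $i$, I replace $\widetilde{w}$ by $\widetilde{w^{(i)}}$ in the defining formula of $T_j$ to obtain an operator $T_{j,i}$. Because the associated $\kappa$-tuples lie in $\{\pm 1\}^{n-1}$ with at most one coordinate equal to $-1$ (so $\sum_{k=1}^{n}\kappa_k\geq n-2$), Theorem \ref{theorem:rank $1$ extreme contractions} tells me that each $T_{j,i}$ is a rank $1$ extreme contraction. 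By linearity, $T_j=\sum_{i=1}^{r}c_i T_{j,i}$.

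Substituting this back yields
\[
T \;=\; \sum_{i=1}^{r} c_i\lambda_0\, T_{0,i} \;+\; \sum_{j=2}^{n}\sum_{i=1}^{r} c_i\lambda_j\, T_{j,i},
\]
with nonnegative coefficients summing to $\sum_{i=1}^{r}c_i\bigl(\lambda_0+\sum_{j=2}^{n}\lambda_j\bigr)=1$, which completes the proof. The one place requiring genuine argument (rather than bookkeeping) is the first stage: recognizing that the norm-attainment condition $\sum\kappa_j\geq n-2$ forces $\sum_{j\geq 2}(1-\kappa_j)\leq 2$, so that $(\kappa_2,\ldots,\kappa_n)$ lies in the simplex spanned by $(1,1,\ldots,1)$ and the $n-1$ vectors obtained from it by a single sign flip; everything else is a routine linear-combination calculation that I do not expect to cause trouble.
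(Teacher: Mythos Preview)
Your proof is correct, and it is organized differently from the paper's. The paper first splits $\widetilde{w}$ as a convex combination of extreme points of $B_{\ell_1^n}$ (your second stage), and only then attacks the $\kappa_j$'s; for that it does not write down a single simplex decomposition but instead reuses the pairwise construction from the necessary part of Theorem~\ref{theorem:rank $1$ extreme contractions}, producing at each step two operators that are ``one step closer'' to extreme contractions, and iterates up to $n-1$ times. Your first stage replaces this recursion by the explicit observation that the constraint $\sum_{j\ge 2}(1-\kappa_j)\le 2$ places $(\kappa_2,\ldots,\kappa_n)$ in the simplex with vertices $(1,\ldots,1)$ and the single-sign-flip vectors, yielding the closed-form weights $\lambda_j=(1-\kappa_j)/2$ and $\lambda_0=1-\sum_{j\ge 2}\lambda_j\ge 0$. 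This is cleaner and gives an explicit bound of at most $n\cdot r$ extreme contractions in the final decomposition, whereas the paper's iterative scheme could a priori produce up to $2^{\,n-1}\cdot r$ terms. Both arguments rest on the same two ingredients (Theorems~\ref{theorem:rank $1$} and~\ref{theorem:rank $1$ extreme contractions}); yours just packages the $\kappa$-step more efficiently.
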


\begin{proof}
	Let $ T $ be a rank $ 1 $ linear operator of unit norm in $ \mathbb{L}(\ell_{\infty}^n,\ell_{1}^n). $ Let $ T(\widetilde{u})= \widetilde{w} \in S_{l_1^n}$ and  for any $ 2 \leq j \leq n ,$ let $ T(\widetilde{u_{j}}) = k_j \widetilde{w},$   where $ \kappa_1 = 1, k_j \in [-1, 1]$ and $ \sum\limits_{j=1}^n \kappa_j \geq (n-2).  $   Suppose that $ \widetilde{w}= c_1 \widetilde{w_1} + c_2 \widetilde{w_2} + \ldots + c_r \widetilde{w_r},$ for some $r \in \mathbb{N},$ where $ \widetilde{w_1}, \widetilde{w_2}, \ldots, \widetilde{w_r} \in Ext(B_{\ell_{1}^n})$ and $ c_i \in [0, 1],$ for any $ 1 \leq i \leq r$ and  $ c_1 + c_2 + \ldots + c_r =1.$ Now let us assume that for any $ 1 \leq i \leq r,$ 
	$ T_i \in \mathbb{L}(\ell_{\infty}^n,\ell_{1}^n)$ such that for any $ 2 \leq j \leq n,$
	\begin{eqnarray*}
		 T_i(\widetilde{u}) &=& \widetilde{w_i} \\
		T_i(\widetilde{u_{j}}) &=& k_j\widetilde{w_i}. 
	\end{eqnarray*}
	   Clearly, $ T = c_1 T_1 + c_2 T_2 + \ldots + c_r T_r.$
 	We employ the same arguments as given in the proof of the necessary part of Theorem \ref{theorem:rank $1$ extreme contractions} to express each  $ T_i $ as a convex combination of two rank $ 1 $ operators $ A_{i1} $ and $ A_{i2} $ of unit norm in $ \mathbb{L}(\ell_{\infty}^n,\ell_{1}^n). $ We notice that $ A_{i1} $ and $ A_{i2} $ are ``one step closer" to being rank $ 1 $ extreme contractions in $ \mathbb{L}(\ell_{\infty}^n,\ell_{1}^n), $ in comparison to each $ T_i. $ We continue applying the same technique on $ A_{i1}, A_{i2}, $ and the subsequent operators obtained in the process until we arrive at rank $ 1 $ extreme contractions. We note that that process would terminate in at most $ (n-1) $ steps. This gives us the required expression of $ T $ as a convex combination of rank $ 1 $ extreme contractions in $ \mathbb{L}(\ell_{\infty}^n,\ell_{1}^n) $ and completes the proof of the theorem. 
\end{proof}

We end this section with the following remark, explaining the motivation behind our choice of the particular pair $ (\ell_{\infty}^n,\ell_{1}^n). $

\begin{remark}
	The Grothendieck's inequality, one of the most fundamental and celebrated results in the metric theory of tensor products of Banach spaces, is closely related to the extremal structure of the unit ball of the space $ \mathbb{L}(\ell_{\infty}^n,\ell_{1}^n), $ considered over the complex field. We refer the readers to the remarkably resourceful and influential article \cite{P}, for more information on this rapidly advancing area of research. In particular, a complete description of the extreme contractions of all rank in the space $ \mathbb{L}(\ell_{\infty}^n,\ell_{1}^n) $ might be useful in estimating the Grothendieck's constant. This explains our choice of the particular pair $  (\ell_{\infty}^n,\ell_{1}^n). $ In this context, Theorem \ref{theorem:rank $1$ extreme contractions} allows us to identify the \emph{rank one} extreme contractions, and Theorem \ref{theorem:convex combination} ensures that we have a complete description of the \emph{rank one} linear operators in the said space. We would like to point out that the descriptions of  higher rank extreme contractions seem to be far more difficult, and the problem remains open to the best of our knowledge.
\end{remark}

\section*{Section-III}

This section is devoted to studying the extreme contractions on finite-dimensional polyhedral Banach spaces. In order to characterize the extreme contractions, we need to introduce some new definitions, motivated by the geometry of polyhedral Banach spaces.

	Let $\mathbb{X}$ be a finite-dimensional polyhedral Banach space and let $v \in S_{\mathbb{X}}.$ Observe that  if there exists an $i$-face $F$ of $B_{\mathbb{X}}$ such that $v \in F,$ for some $1 \leq i < n-1,$ then there exists an $(i+1)$-face of $B_{\mathbb{X}}$ containing $v.$ If $F$ is an  $i$-face containing $v$  such that there exists no $(i-1)$-face of $B_{\mathbb{X}}$ containing $v,$  then $F$ is said to be the minimal face of $v.$ Note that the minimal face of an element $v \in S_{\mathbb{X}}$ is always unique. Next we introduce the following definitions whose importance in describing the extreme contractions between finite-dimensional polyhedral Banach spaces will be clear in due course of time.

\begin{definition}
	Let $\mathbb{X}$ and $\mathbb{Y}$ be finite-dimensional polyhedral Banach spaces and let $T \in S_{\mathbb{L}(\mathbb{X}, \mathbb{Y})}.$ If $S$ is a maximal (with respect to the usual partial order induced by set inclusion) linearly independent  subset of $M_T \cap Ext(B_{\mathbb{X}})$ such that $T(S) \subset Ext(B_{\mathbb{Y}})$ then the cardinality of $S$ is defined as \emph{the extremal number corresponding to $T.$}
\end{definition}

It is elementary to observe that the extremal number corresponding to $ T $ is independent of the choice of $ S $ in the above definition.

\begin{definition}
	Let $\mathbb{X}$  be a finite-dimensional polyhedral Banach space. Let $v \in S_{\mathbb{X}}$ and let $F$ be the minimal face of $ v.$ Suppose that  $Ext(B_{\mathbb{X}}) \cap F=\{ w_1, w_2, \ldots, w_r\}.$ Let $\epsilon > 0.$ An $r$-tuple of scalars $( \mu_1, \mu_2, \ldots, \mu_r)$ is said to be \emph{an $F$-associated tuple of scalars corresponding to $v$ with $\epsilon$-dominated norm} if $ v+ \sum_{i=1}^{r} \mu_i w_i , ~ v- \sum_{i=1}^{r} \mu_i w_i \in F$ and $\| \sum_{i=1}^{r} \mu_i w_i \| < \epsilon.$  
\end{definition}

By applying the convexity property of the norm function, it is not difficult to observe that if $(\mu_1, \mu_2, \ldots, \mu_r)$ is an $F$-associated tuple of scalars corresponding to $v$ with $\epsilon$-dominated norm then $ \sum_{i=1}^{r} \mu_i =0.$
%It should be noted that suppose that  $(\mu_1, \mu_2, \ldots, \mu_r)$ is an $F$-associted tuple of scalar of $v$ with $\epsilon$-dominated norm, where $Ext(B_{\mathbb{X}}) \cap F=\{ w_1, w_2, \ldots, w_r\}$ and   $v = c_1 w_1+ c_2 w_2 + \ldots + c_r w_r,$ for some $ c_1, c_2, \ldots, c_r \in [0, 1]$ such that $ c_1 + c_2 + \ldots+ c_r =1.$ Now $ v+ \sum_{i=1}^{r} \mu_i w_i , ~ v- \sum_{i=1}^{r} \mu_i w_i \in F \implies \sum_{i=1}^{r}(c_i -\mu_i) = \sum_{i=1}^{r} (c_i + \mu_i) =1,$ which implies $ \sum_{i=1}^{r} \mu_i =0.$ Moreover, when we say an $F$-associated tuple of scalars of $ v$ with $\epsilon$-dominated norm, it is evident that $F$ is considered as the minimal face of $v.$

\begin{definition}
	Let $\mathbb{X}$ be a finite-dimensional polyhedral Banach space. Let $ x \in B_{\mathbb{X}}$ and let $\epsilon > 0.$ Then a pair of elements $u, v \in \mathbb{X}$ is said to be  \emph{a pair of $\epsilon$-diametric points of $x$} if  $ u, v \in \mathcal{B}(x, \epsilon) \cap B_{\mathbb{X}}$ and  $u+v=2x.$
\end{definition}

\begin{definition} 
	Let $\mathbb{X}$ and $\mathbb{Y}$ be finite-dimensional polyhedral Banach spaces, where $dim(\mathbb{X})=n.$ Let  $T \in S_{\mathbb{L}(\mathbb{X}, \mathbb{Y})}$ and let $(x,y) \in S_{\mathbb{X}} \times S_{\mathbb{Y}}.$ Suppose that $F$ is the minimal face of $y \in S_{\mathbb{Y}}$ such that $ Ext(B_{\mathbb{Y}}) \cap F= \{ w_1, w_2, \ldots, w_r\}.$ Also assume that $ S =\{ x, z_2, z_3, \ldots, z_n\} \subset S_{\mathbb{X}} $ is a linearly independent set. 
	Then $(x, y)$ is called \emph{a $*$-pair of points with respect to $ S $}  
	if given any $\epsilon> 0$ and any pair of $\epsilon$-diametric points $u_i, v_i$ of $Tz_i,$ where $ 2 \leq i \leq n,$ there exists an $\widetilde{v}= \alpha_1 x+ \sum_{i=2}^{n} \alpha_i z_i \in Ext(B_{\mathbb{X}})$ such that the following condition holds:
	\[ \| \alpha_1 y+ \sum_{i=2}^n \alpha_i u_i + \alpha_1\bigg( \sum_{j=1}^{r} \mu_j w_j \bigg) \| > 1	\]	or
	\[ \| \alpha_1 y+ \sum_{i=2}^n \alpha_i v_i - \alpha_1 \bigg( \sum_{j=1}^{r} \mu_j w_j \bigg) \| > 1,	\]
	for any  $F$-associated tuple of scalars corresponding to $y$ with $\epsilon$-dominated norm, $(\mu_1, \mu_2, \ldots, \mu_r),$ where  $ \alpha_1 (\sum_{j=1}^{r} \mu_j w_j ) \neq \theta $ whenever $ (u_2, u_3, \ldots, u_n)= (v_2, v_3, \ldots, v_n ).$
\end{definition}

The following result describes a nice property of the faces of a polyhedral Banach spaces.

\begin{proposition}\label{minimal face}
	Let $\mathbb{X}$ be a finite-dimensional polyhedral Banach space and let $v \in S_{\mathbb{X}}.$ Suppose that $F$ is a face of $ B_{\mathbb{X}}, $ containing $v.$ Then for any $ u, w \in S_{\mathbb{X}}$ such that $v = (1-t)u+ tw,$ it follows that $u, w \in F.$
\end{proposition}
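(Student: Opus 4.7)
The plan is to exploit the defining description of a face of the polyhedron $B_{\mathbb{X}}$ given in the paper: either $F = B_{\mathbb{X}}$ itself, or $F = B_{\mathbb{X}} \cap \delta M$ for some closed half-space $M$ containing $B_{\mathbb{X}}$. The first case is immediate since $u, w \in S_{\mathbb{X}} \subset B_{\mathbb{X}} = F$, so only the second case requires genuine argument. We may also dispose of the degenerate cases $t = 0$ and $t = 1$ at the outset, since then $v$ coincides with $u$ or $w$ and there is nothing to prove.

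In the remaining case, I would write $M = \{x \in \mathbb{X} : f(x) \leq c\}$ for some non-zero linear functional $f$ and scalar $c$, so that $\delta M = \{x \in \mathbb{X} : f(x) = c\}$. Since $v \in F \subset \delta M$, we have $f(v) = c$. On the other hand, since $u, w \in S_{\mathbb{X}} \subset B_{\mathbb{X}} \subset M$, we have $f(u) \leq c$ and $f(w) \leq c$.

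The decisive step is to apply $f$ to the relation $v = (1-t)u + tw$ with $t \in (0,1)$. This gives
\[
c = f(v) = (1-t)f(u) + tf(w).
\]
Because $1-t, t > 0$ and both $f(u), f(w)$ are bounded above by $c$, the only way this convex combination can attain the upper bound $c$ is to have $f(u) = f(w) = c$. Hence both $u$ and $w$ lie in $\delta M \cap B_{\mathbb{X}} = F$, which is the desired conclusion.

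I do not expect any substantive obstacle in this argument; the proposition is essentially the observation that the polyhedral notion of face coincides with the intrinsic (extremal) notion of face, specialized to the convex combination at hand. The only point that warrants brief care is explicitly invoking $t \in (0,1)$ before concluding equality in the chain $f(u), f(w) \leq c$, so that positivity of the coefficients forces the equalities.
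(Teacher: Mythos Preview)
Your proof is correct and follows essentially the same approach as the paper's: both arguments write the face as $B_{\mathbb{X}} \cap \delta M$ for a supporting half-space $M$, apply the associated linear functional to the convex combination $v = (1-t)u + tw$, and use the upper bound on $f(u), f(w)$ together with positivity of the coefficients to force equality. The only cosmetic difference is that the paper normalizes so that $f \in S_{\mathbb{X}^*}$ and $c = 1$, invoking $\|f\| = 1$ to bound $f(u), f(w)$, whereas you invoke $B_{\mathbb{X}} \subset M$ directly; these are equivalent.
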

\begin{proof}
Since $F$ is a face of $v \in S_{\mathbb{X}},$ it can be written as $F = S_{\mathbb{X}} \cap \delta M,$ where  $\delta M$ is the boundary of a  closed half-space $M$ in $ \mathbb{X}. $ It is easy to observe that there exists a  functional $f \in S_{\mathbb{X}^*}$ such that $\delta M= \{ x \in \mathbb{X}: f(x)=1\}.$ Now, $f((1-t)u+tw)= f(v)=1$ and from this, it is easy to see that $f(u)= f(w)=1,$ by using that $ \| f \| = 1. $  Since $u, w\in S_{\mathbb{X}},$  we conclude that $u, w \in F.$ 
\end{proof}

%\begin{remark}
 %	For any face $F_1$ of $B_{\mathbb{X}}$ containing $v\in S_{\mathbb{X}},$ we have $F \subset F_1$ where $F$ is the minimal face of $v.$ Therefore, it is immediate that the above proposition is true for any face containing $v$ i.e., if $u, w \in S_{\mathbb{X}}$ and $v= (1-t)u+tw \in \widetilde{F}$ then $u,w \in \widetilde{F}.$
%\end{remark}

We are  now ready to present a complete characterization of  the extreme contractions of  $\mathbb{L}(\mathbb{X}, \mathbb{Y}),$ in terms of our newly introduced notion of $*$-pair of points, where $\mathbb{X}, \mathbb{Y}$ are finite-dimensional polyhedral Banach spaces.

\begin{theorem}\label{characterization}
	Let $\mathbb{X}$ and $\mathbb{Y}$ be  finite-dimensional polyhedral Banach spaces and let $dim(\mathbb{X})=n.$  $T \in S_{\mathbb{L}(\mathbb{X}, \mathbb{Y})}$  is an extreme contraction if and only if exactly one of the following conditions holds true:\\
	(i) Extremal number of $T$ is equal to $n$.\\
	(ii) Extremal number of $T$ is strictly less than $n$ and there exists $v \in M_T \cap Ext(B_{\mathbb{X}})$ and $ S \subset S_{\mathbb{X}}$, a basis of $\mathbb{X}$ containing $v$ such that   $Tv \notin Ext(B_{\mathbb{Y}})$ and $(v, Tv) $ is a $*$-pair of points with respect to $ S.$
\end{theorem}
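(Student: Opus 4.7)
The plan is to prove both directions of the equivalence by splitting on whether the extremal number of $T$ equals $n$. For sufficiency in case (i), I would mimic the argument in the sufficiency portion of Theorem \ref{rank1}: select $n$ linearly independent $v_1, \ldots, v_n \in M_T \cap Ext(B_\mathbb{X})$ with each $Tv_i \in Ext(B_\mathbb{Y})$, and for any decomposition $T = \frac{1}{2}(T_1 + T_2)$ with $T_1, T_2 \in S_{\mathbb{L}(\mathbb{X}, \mathbb{Y})}$, the identity $Tv_i = \frac{1}{2}(T_1 v_i + T_2 v_i)$ combined with extremality of $Tv_i$ in $B_\mathbb{Y}$ forces $T_1 v_i = T_2 v_i = Tv_i$, so $T_1 = T_2 = T$ by linearity.

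For sufficiency in case (ii), suppose $T = \frac{1}{2}(T_1 + T_2)$ with $T_1 \ne T$ and form the perturbations $T_\lambda^\pm = T \pm \lambda(T_1 - T)$. A direct expansion rewrites $T_\lambda^+ = \frac{1+\lambda}{2} T_1 + \frac{1-\lambda}{2} T_2$ and $T_\lambda^- = \frac{1-\lambda}{2} T_1 + \frac{1+\lambda}{2} T_2$, hence $\|T_\lambda^\pm\| \le 1$ for $\lambda \in [0,1]$, while $T_\lambda^+ + T_\lambda^- = 2T$. For each basis element $z_i$ with $i \ge 2$, the pair $(T_\lambda^+ z_i, T_\lambda^- z_i)$ is $\epsilon$-diametric to $Tz_i$ once $\lambda$ is small. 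For $v$, $\|Tv\|=1$ together with $\|T_i v\| \le 1$ forces $\|T_i v\|=1$, so Proposition \ref{minimal face} places $T_1 v, T_2 v$ in the minimal face $F$ of $y = Tv$; expanding $T_1 v - y$ in the extreme points $\{w_1, \ldots, w_r\}$ of $F$ then yields $T_\lambda^\pm v = y \pm \sum \mu_j w_j$ with $(\mu_j)$ an $F$-associated tuple of $\epsilon$-dominated norm for $\lambda$ small. The $*$-pair hypothesis now supplies an extreme $\widetilde{v} = \alpha_1 v + \sum_{i \ge 2}\alpha_i z_i$ for which $\|T_\lambda^+ \widetilde{v}\| > 1$ or $\|T_\lambda^- \widetilde{v}\| > 1$, contradicting $\|T_\lambda^\pm\| \le 1$; the non-degeneracy clause is automatic because $T_1 \ne T$ forces either $T_1 z_i \ne Tz_i$ for some $i$ or $T_1 v \ne Tv$.

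For necessity, if the extremal number of $T$ equals $n$ then case (i) holds. Otherwise I first locate $v$ as follows: by \cite[Th. 2.2]{SRP}, $M_T \cap Ext(B_\mathbb{X})$ spans $\mathbb{X}$, so any maximal linearly independent subset $S_0$ of $\{x \in M_T \cap Ext(B_\mathbb{X}) : Tx \in Ext(B_\mathbb{Y})\}$ has cardinality equal to the extremal number, strictly less than $n$; hence $\mathrm{span}(S_0) \subsetneq \mathbb{X}$, and any $v \in M_T \cap Ext(B_\mathbb{X}) \setminus \mathrm{span}(S_0)$ satisfies $Tv \notin Ext(B_\mathbb{Y})$ by the maximality of $S_0$. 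To locate $S$, I extend $\{v\}$ to a basis $\{v, z_2, \ldots, z_n\} \subset S_\mathbb{X}$ and argue contrapositively: if $(v, Tv)$ fails to be a $*$-pair with respect to $S$, the failure supplies data $\epsilon, \{u_i, v_i\}, (\mu_j)$ from which I define operators $T_\pm$ on $S$ via $T_\pm v = y \pm \sum \mu_j w_j$ and $T_+ z_i = u_i$, $T_- z_i = v_i$. The failure forces $\|T_\pm \widetilde{v}\| \le 1$ on every extreme $\widetilde{v} \in Ext(B_\mathbb{X})$, and by Krein-Milman applied to $B_\mathbb{X}$ this upgrades to $T_\pm \in B_{\mathbb{L}(\mathbb{X}, \mathbb{Y})}$; the identities $T_+ + T_- = 2T$ on the basis and the non-degeneracy of the failing data guarantee $T_+ \ne T_-$, producing a non-trivial convex decomposition that contradicts extremality of $T$. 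Therefore for at least one choice of $S$, $(v, Tv)$ must be a $*$-pair.

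The main obstacle will be the careful unpacking of the $*$-pair definition, particularly the non-degeneracy clause $\alpha_1(\sum \mu_j w_j) \ne \theta$ whenever $(u_2, \ldots, u_n) = (v_2, \ldots, v_n)$: this is what excludes the trivial coincidence $T_1 = T$ in sufficiency and certifies $T_+ \ne T_-$ in necessity. A secondary delicate point is verifying that the expansion $T_\lambda^\pm v = y \pm \sum \mu_j w_j$ in the extreme points of the minimal face $F$ is well-defined, which relies on Proposition \ref{minimal face} applied to $Tv = \frac{1}{2}(T_1 v + T_2 v)$ to place $T_1 v, T_2 v$ inside $F$ so that their convex combinations with $y$ also lie in $F$.
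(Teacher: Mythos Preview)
Your proposal is correct and follows essentially the same route as the paper's proof. The only differences are cosmetic: in sufficiency (ii) you make explicit, via the $\lambda$-scaling $T_\lambda^\pm = T \pm \lambda(T_1 - T)$, the perturbation that the paper dismisses with ``it is easy to observe that there exist $T_1,T_2\in S_{\mathbb{L}(\mathbb{X},\mathbb{Y})}$ with $\|T_i-T\|<\epsilon$''; and in the necessity direction you justify more carefully why a point $v\in M_T\cap Ext(B_\mathbb{X})$ with $Tv\notin Ext(B_\mathbb{Y})$ exists, whereas the paper simply asserts it.
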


\begin{proof}
	Let us first prove the sufficient part of the theorem. Assume that the  extremal number of $T$ is equal to $n.$ Suppose on the contrary that $T= \frac{1}{2}T_1+\frac{1}{2} T_2,$ for some $T_1, T_2 \in S_{\mathbb{L}(\mathbb{X}, \mathbb{Y})}.$  Suppose  $v_1, v_2, \ldots, v_n $ are linearly independent points in $M_{T}\cap Ext(B_{\mathbb{X}})$ such that $Tv_i \in Ext(B_{\mathbb{Y}}),$ for all $1 \leq i \leq n.$ Then $Tv_i= \frac{1}{2}T_1v_i+\frac{1}{2}T_2v_i,$ for all $1 \leq i \leq n.$ Since $Tv_i \in Ext(B_{\mathbb{Y}}),$ we get  $Tv_i= T_1v_i= T_2v_i,$ for all $1 \leq i \leq n.$ Therefore, $T= T_1= T_2.$  This implies that $T$ is an extreme contraction.
	Next assume that the extremal number of $T$ is strictly less than $n$ and that there exists  $v \in M_T \cap Ext(B_{\mathbb{X}})$ such that $ Tv \notin Ext(B_{\mathbb{Y}}).$  Also, let     $ S = \{ v , z_2, z_3, \ldots, z_n\} \subset S_{\mathbb{X}} $ be a basis of $\mathbb{X}$ containing $v$ such that $(v,Tv) $ is a $*$-pair of points with respect to $S.$  Suppose on the contrary that $T$ is not an extreme contraction. Taking $\epsilon> 0,$ it is easy to observe that there exists $T_1, T_2 \in S_{\mathbb{L}(\mathbb{X}, \mathbb{Y})}$ such that $T= \frac{1}{2} T_1+ \frac{1}{2} T_2$ and $\|T_1-T \| < \epsilon, \|T_2-T \|< \epsilon.$   Let $F$ be the minimal face  of the element $Tv$ and  $F \cap Ext(B_{\mathbb{Y}})= \{ w_1, w_2, \ldots, w_r\}.$ Therefore, $Tv = c_1w_1+ c_2w_2+ \ldots+ c_rw_r,$ for some $c_i$'s $ \in (0, 1)$ such that  $ \sum_{i=1}^{r} c_i= 1.$ 
	Since $v \in M_T,$ it easily follows that $v \in M_{T_1}\cap M_{T_2}.$  Also, as $ Tv = \frac{1}{2}( T_1v + T_2v) $ and $F$ is the minimal face of $Tv,$ we conclude that  $T_1v, T_2v \in F,$  by Proposition \ref{minimal face}.
	Therefore,
	\begin{eqnarray*}
		T_1v=  \sum_{i=1}^r (c_i+\mu_i)w_i= Tv +\sum_{i=1}^r \mu_iw_i,\\
		T_2v=  \sum_{i=1}^r (c_i-\mu_i)w_i= Tv -\sum_{i=1}^r \mu_iw_i,\\
	\end{eqnarray*}
	for some   $F$-associated tuple  of scalars,  $ (\mu_1,\mu_2, \ldots, \mu_r),$ corresponding to $Tv$ with $\epsilon$-dominated norm.
	It is easy to verify that  $  T_1 z_i$ and $  T_2z_i$ is a  pair of $\epsilon$-diametric points of each $Tz_i,$ for any $ 2 \leq i \leq n.$
	Let  $\widetilde{x} = \alpha_1v+\sum_{i=2}^{n}\alpha_i z_i, \in Ext(B_{\mathbb{X}}),$  for some scalars $ \alpha_i$'s. Then clearly $  \|T_1\widetilde{x} \| \leq 1$  and $\|T_2\widetilde{x}\| \leq 1.$
	Therefore, 
	\begin{eqnarray*}
		\|  \alpha_1 Tv + \alpha_1 \bigg( \sum_{j=1}^r \mu_jw_j \bigg) + \sum_{i=2}^{n}\alpha_i  T_1z_i \| \leq 1,\\
		\| \alpha_1 Tv - \alpha_1  \bigg( \sum_{j=1}^r \mu_jw_j \bigg) + \sum_{i=2}^{n} \alpha_i  T_2z_i \| \leq 1. 
	\end{eqnarray*}
	Summarizing,  there exists  $\epsilon > 0$  and  a  pair of $\epsilon $-diametric points $ T_1z_i, T_2z_i$  of $Tz_i,$  ($ 2 \leq i \leq n $)  such that for any $\widetilde{x}= \alpha_1v+\sum_{i=2}^{n}\alpha_i z_i \in Ext(B_{\mathbb{X}}), $  we have that
	\begin{eqnarray*}
		\|  \alpha_1 Tv + \alpha_1  \bigg( \sum_{j=1}^r \mu_jw_j \bigg) + \sum_{i=2}^{n}\alpha_i T_1z_i \| \leq 1 \\
		\mbox{and}   \, \| \alpha_1 Tv - \alpha_1  \bigg( \sum_{j=1}^r \mu_jw_j \bigg) + \sum_{i=2}^{n}\alpha_i T_2z_i \| \leq 1,
	\end{eqnarray*}
	for some $F$-associated tuple of scalars,  $ (\mu_1, \mu_2, \ldots, \mu_r)$ corresponding to $Tv$ with $\epsilon$-dominated norm. Since $T \neq T_1 \neq T_2,$ so  $ \alpha_1  ( \sum_{j=1}^r \mu_jw_j ) \neq \theta $ whenever $(T_1z_2, T_1z_3, $ $ \ldots,  T_1z_n)= (T_2z_2, T_2z_3, \ldots, T_2z_n).$ 
	This contradicts the fact  that  $ (v,Tv)$ is an $*$-pair of points with respect to $ S.$
	
	\medskip
	
	Next we  prove the necessary part of the theorem. Let $ T \in S_{\mathbb{L} (\mathbb{X}, \mathbb{Y})} $ be an extreme contraction. From \cite[Th. 2.2]{SRP},  we get that $ span(M_T \cap Ext(B_{\mathbb{X}}))= \mathbb{X}.$ If the extremal number of $T$ is  equal to $n,$ then we have nothing to prove. Let us now consider that the extremal number of $ T $ is strictly less than $n.$ Let $v \in M_T \cap Ext(B_{\mathbb{X}})$ such that $Tv \notin Ext(B_{\mathbb{Y}}).$ Let $F$ be the minimal face of $Tv$ and let $Ext(B_{\mathbb{Y}}) \cap F= \{ w_1, w_2, \ldots, w_r\}.$ Suppose on the contrary that $(v,Tv)$ is not a $*$-pair of points with respect to $ S,$ for any basis $ S \subset S_{\mathbb{X}}$ of $\mathbb{X}$ containing $v$. Let $ S =\{ v, z_2, z_3, \ldots, z_n\} \subset S_{\mathbb{X}}.$
	Then  there exists  $\epsilon > 0$ and a pair of $\epsilon$-diametric points $u_i, v_i$ of  $ Tz_i $ $  ( 2 \leq i \leq n) $ such that for any $\widetilde{x}= \alpha_1 x+ \sum_{i=2}^{n} \alpha_i z_i \in Ext(B_{\mathbb{X}}),$  we have
	\begin{eqnarray*}
		\| \alpha_1 Tv+ \sum_{i=2}^n \alpha_i u_i + \alpha_1  \bigg( \sum_{j=1}^r \mu_jw_j \bigg) \| \leq  1 \\
		\| \alpha_1 Tv+ \sum_{i=2}^n \alpha_i v_i - \alpha_1  \bigg( \sum_{j=1}^r \mu_jw_j \bigg) \| \leq 1,
	\end{eqnarray*}
	for some   $F$-associated tuple of scalars,  $ (\mu_1, \mu_2, \ldots, \mu_r)$ corresponding to $Tv$ with $\epsilon$-dominated norm  such that  $ \alpha_1  ( \sum_{j=1}^r \mu_jw_j ) \neq  \theta $ whenever $ (u_2, u_3, \ldots, u_n)= (v_2, v_3, $ $ \ldots,  v_n).$ 
	Define $T_1, T_2 \in L(\mathbb{X}, \mathbb{Y})$  as 
	\begin{eqnarray*}
		T_1v &=& Tv +  \sum_{j=1}^r \mu_jw_j \\
		T_1z_i &=& u_i, \quad 2 \leq i \leq n\\
		\mbox{and}~	T_2v &=& Tv -   \sum_{j=1}^r \mu_jw_j \\
		T_2z_i &=& v_i,  \quad  2 \leq i \leq n.
	\end{eqnarray*} 
	It is now immediate that $ T= \frac{1}{2} T_1 + \frac{1}{2} T_2.$ Since $ \alpha_1  ( \sum_{j=1}^r \mu_jw_j ) \neq     \theta $ whenever $ (u_2, u_3, \ldots, u_n)= (v_2, v_3, \ldots, v_n),$  we have $T_1 \neq T \neq T_2.$
	For any $ \widetilde{x} \in Ext(B_{\mathbb{X}}),$ 
	\begin{eqnarray*}
		\| T_1\widetilde{x} \| &=& \| T_1(  \alpha_1 v+ \sum_{i=2}^{n} \alpha_i z_i ) \|\\ 
		& = & \|  \alpha_1 Tv + \sum_{i=2}^{n} \alpha_i  u_i  + \alpha_1  \bigg( \sum_{j=1}^r \mu_jw_j \bigg) \|  \leq 1.
	\end{eqnarray*} 
	Similarly, $ \| T_2\widetilde{x} \| \leq 1.$ Thus, $T_1, T_2 \in B_{\mathbb{L}(\mathbb{X},\mathbb{Y})},$ and this  contradicts  the assumption that $T$ is an extreme contraction. This establishes the theorem.
\end{proof}

\begin{remark}
	From the above theorem, it is easy to observe that for the pair $(\mathbb{X}, \mathbb{Y})$ such that there exists $T \in Ext(\mathbb{L}(\mathbb{X}, \mathbb{Y})) $ satisfying $T(Ext(B_\mathbb{X})) \cap Ext(B_{\mathbb{Y}}) \neq \phi,$ we  obtain a $v \in Ext(B_{\mathbb{X}})$ and  a basis $S\subset S_{\mathbb{X}}$ of $ \mathbb{X}$ containing $v$ such that $(v, Tv)$ is a $*$-pair of points with respect to $S$, for some $T \in S_{\mathbb{L}(\mathbb{X}, \mathbb{Y})}.$
\end{remark}

The following result, which also illustrates the application of $*$-pair of points in the study of estreme contractions, follows as a consequence of the above theorem.

\begin{corollary}\label{example:corollary}
	Let $ \mathbb{X}$ be a $2$-dimensional polyhedral Banach space such that $ $ $ | Ext(B_{\mathbb{X}}) | \geq 8.$ Then any linear operator $ T \in S_{\mathbb{L}(\mathbb{X}, \ell_{\infty}^2)} $ such that $ | M_T \cap Ext(B_{\mathbb{X}}) | =
	8$ is an extreme contraction.
\end{corollary}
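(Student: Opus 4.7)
The plan is to apply Theorem \ref{characterization}, condition (ii), after extracting the geometric structure forced by $|M_T \cap Ext(B_{\mathbb{X}})|=8$. Writing $T=(f_1,f_2)$ with $f_i\in \mathbb{X}^*$, one has $\|Tx\|_{\infty}=\max(|f_1(x)|,|f_2(x)|)$. In a two-dimensional polyhedral space each functional $f_i$ attains its norm either on a single antipodal pair of vertices or on a single antipodal pair of edges, so a case count shows that $|M_T \cap Ext(B_{\mathbb{X}})|=8$ can occur only when both $f_1$ and $f_2$ attain norm along edges $\pm E_1,\pm E_2$ whose endpoints $\pm a_1,\pm a_2,\pm b_1,\pm b_2$ are eight distinct extreme points of $B_{\mathbb{X}}$.

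This immediately yields $Ta_i=(\pm 1, f_2(a_i))$ with $|f_2(a_i)|<1$, so every $Tv$ for $v\in M_T\cap Ext(B_{\mathbb{X}})$ lies in the relative interior of a facet of $B_{\ell_{\infty}^2}$ and is not extreme; thus the extremal number of $T$ is $0<n=2$, condition (i) of Theorem \ref{characterization} fails, and I must verify condition (ii). I would take $v=a_1$ and $z_2=a_2$: since $a_1,a_2\in S_{\mathbb{X}}$ are distinct and non-antipodal they are linearly independent, so $S=\{a_1,a_2\}$ is a basis, and $Ta_1\notin Ext(B_{\ell_\infty^2})$.

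To verify that $(a_1,Ta_1)$ is a $*$-pair with respect to $S$, note that the minimal face $F$ of $Ta_1$ is the right edge of the square with vertices $w_1=(1,1), w_2=(1,-1)$, so any $F$-associated tuple has the form $(\mu,-\mu)$ and contributes a perturbation $(0,2\mu)$. Similarly, because $(Ta_2)_1=1$, every $\epsilon$-diametric pair of $Ta_2$ is forced to have the shape $Ta_2\pm(0,\delta)$ with $|\delta|<\epsilon$. Expanding $b_k=\alpha_1^{(k)}a_1+\alpha_2^{(k)}a_2$ for $k=1,2$ (choosing $b_1,b_2$ on the same side of $\pm E_2$) and exploiting $\alpha_1^{(k)}f_2(a_1)+\alpha_2^{(k)}f_2(a_2)=(Tb_k)_2=1$, the second coordinate of the candidate vector in the $*$-pair inequality evaluates to $1+\alpha_2^{(k)}\delta+2\alpha_1^{(k)}\mu$, and the appropriate sign branch pushes it past $1$ whenever $A_k:=\alpha_2^{(k)}\delta+2\alpha_1^{(k)}\mu\neq 0$.

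The main obstacle is to guarantee that at least one of $\tilde v=b_1,b_2$ always yields $A_k\neq 0$. The $2\times 2$ matrix with rows $(\alpha_2^{(k)},2\alpha_1^{(k)})$ is a nonzero multiple of the change-of-basis matrix from $\{a_1,a_2\}$ to $\{b_1,b_2\}$, which is nonsingular because $b_1,b_2$ are non-antipodal extreme points of $S_{\mathbb{X}}$; hence $A_1=A_2=0$ forces $(\delta,\mu)=(0,0)$. In the remaining trivial diametric pair case $\delta=0$, the auxiliary requirement $\alpha_1(\mu_1 w_1+\mu_2 w_2)\neq 0$ becomes $\alpha_1^{(k)}\mu\neq 0$, and $\tilde v=b_1$ meets this (since $\alpha_1^{(1)}\neq 0$ as $b_1\neq\pm a_2$), making $A_1\neq 0$ for any non-zero $\mu$. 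Theorem \ref{characterization}(ii) then yields that $T$ is an extreme contraction.
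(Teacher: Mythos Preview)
Your structural analysis via $T=(f_1,f_2)$ is clean and correct, and the overall plan (verify condition (ii) of Theorem~\ref{characterization}) matches the paper. The difference lies in the choice of the basis $S$, and this is where your argument breaks.

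The quantifier order in the definition of a $*$-pair is $\forall\epsilon\,\forall(u_2,v_2)\,\exists\,\widetilde v\,\forall(\mu_1,\mu_2)$: a \emph{single} extreme point $\widetilde v$ must work for \emph{every} admissible $F$-associated tuple. (This is forced by the side clause ``$\alpha_1(\sum\mu_jw_j)\neq\theta$ whenever $u_2=v_2$'', which involves $\alpha_1$ and hence presupposes that $\widetilde v$ has already been chosen.) Your nonsingularity argument only shows that for each pair $(\delta,\mu)\neq(0,0)$ \emph{some} $b_k$ gives $A_k\neq0$; it does not produce one $b_k$ that works for all $\mu$ simultaneously. In fact, with your basis $S=\{a_1,a_2\}$ (both mapped to the \emph{same} facet $\{x_1=1\}$), the diametric perturbation of $Ta_2$ and the $F$-associated perturbation of $Ta_1$ are both of the form $(0,\ast)$, so they can cancel: for any small $\delta\neq0$ and either $k$, one has $\alpha_1^{(k)}\neq0$ (since $b_k\neq\pm a_2$), and the value $\mu_k^\ast=-\alpha_2^{(k)}\delta/(2\alpha_1^{(k)})$ lies in the admissible range and kills $A_k$. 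No other $\widetilde v\in Ext(B_{\mathbb X})$ helps either, because for $\widetilde v\in\{\pm a_1,\pm a_2\}$ the norm equals $1$, and for any remaining vertex $c\notin M_T$ both coordinates of $Tc$ are strictly less than $1$ in modulus. Thus $(a_1,Ta_1)$ is \emph{not} a $*$-pair with respect to your $S$.

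The paper avoids this by choosing $S=\{v_1,v_3\}$ with $Tv_1$ and $Tv_3$ on \emph{different} facets of $B_{\ell_\infty^2}$. Then the diametric perturbation of $Tv_3$ lives in the first coordinate while the $F$-associated perturbation of $Tv_1$ lives in the second, so they are decoupled: when $\gamma\neq0$ the witness $\widetilde v=v_2$ (with $Tv_2=(1,\alpha)$) works for all $\mu_1$ via the first coordinate, and when $\gamma=0$ the witness $\widetilde v=v_4$ (with $Tv_4=(\beta,1)$) works for all $\mu_1\neq0$ via the second coordinate. The fix to your argument is therefore simply to replace $z_2=a_2$ by $z_2=b_1$ (one vertex from each edge); the rest of your computations then go through with the correct quantifier order.
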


\begin{proof}
	Let $ \pm F_1, \pm F_2$ be the edges of $B_{l_{\infty}^2}$, where $ \{(1,1), (1,-1) \}$ and $ \{(1,1), (-1,1) \}$ are the extreme points of the edges $ F_1$ and $F_2,$ respectively.  Suppose that  $ M_T \cap Ext(B_{\mathbb{X}})= \{ \pm v_1, \pm v_2, \pm v_3, \pm v_4\}.$ It is easy to verify that   $| T(Ext(B_{\mathbb{X}})) \cap F_i| =2,$ where $i \in \{1,2\}.$
	%We claim that  $rank~T=2.$ Suppose on the contrary that rank $ T=1.$ Without the loss of generality  we assume that $Tv_i=w \in S_{\ell_{\infty}^2},$ for all $i \in \{1, 2, 3, 4\}.$ Let $v_3= c_1v_1+ c_2v_2.$ Therefore, $Tv_3= c_1Tv_1+ c_2Tv_2 \implies w=c_1w+c_2w \implies c_1+ c_2=1.$ So, $v_1, v_2, v_3$ lies on a straightline which contradicts the fact $v_1, v_2, v_3 \in Ext(B_{\mathbb{X}}).$ Thus rank $ T=2$ and so $T$ is invertible.Next we claim that  $| T(Ext(B_{\mathbb{X}})) \cap F_i| =2,$ where $i \in \{1,2\}.$ If not then let $Tv_1, Tv_2, Tv_3 \in F_i,$ for some $i \in \{1, 2\}.$ Since $F_i$ is an edge of $B_{\ell_{\infty}^2},$ we can assume $Tv_1= (1-t)Tv_2+ tTv_3,$ where $t \in (0,1).$ This implies $v_1= (1-t)v_2+ tv_3,$ which again contradicts the fact that $v_1 \in Ext(B_{\mathbb{X}}).$
	Without loss of generality, let us assume that $ Tv_1, Tv_2 \in F_1$ and $ Tv_3, Tv_4 \in F_2,$ where $ Tv_i \notin Ext (B_{l_{\infty}^2}),$ for any $i \in \{1, 2, 3, 4\}.$ 
	Let $\epsilon > 0.$ Since $ Tv_3 \in F_2, $ it is easy to observe from Proposition \ref{minimal face} that for any pair of $\epsilon$-diametric points  $w_1, w_2$ of $Tv_3,$ it holds that  $ w_1, w_2 \in F_2$.
	Moreover,  $w_1, w_2$  can be written as 
	\begin{eqnarray*}
		w_1= Tv_3-\gamma[(1,1)- (-1,1)]= Tv_3 - \gamma(2, 0), \\
		w_2= Tv_3+\gamma[(1,1)- (-1,1)]= Tv_3 + \gamma(2, 0),
	\end{eqnarray*}
	for some $ \gamma \in [0, \frac{\epsilon}{2}).$ 
	Clearly, $F_1$ is the minimal face of $Tv_1.$ Suppose that $( \mu_1, \mu_2)$ is an $ F_1$-associated tuple of scalars corresponding to  $Tv_1$ with $\epsilon$-dominated norm such that  $a_1(\mu_1(1, 1)+ \mu_2(1, -1)) \neq \theta,$ whenever $w_1=w_2.$    It is now easy to observe that $\mu_1+ \mu_2=0$ and at least one of  $ \mu_1 $ and $\gamma$ is non-zero.

	 Let $ v_2= a_1v_1+ b_1v_3$ and let $ Tv_2 = ( 1, \alpha),$ where $|\alpha|<1,$ then  
	\begin{eqnarray*}
		\| a_1Tv_1 &+& a_1\{\mu_1(1,1)+ \mu_2(1,-1)\}+ b_1 w_1 \| \\ 
		&=&
		\| a_1Tv_1+ b_1 Tv_3 + a_1 \mu_1\{(1,1)-(1,-1)\} - b_1 \gamma(2,0) \|\\ 
		& =& \| Tv_2 + a_1\mu_1(0,2) - b_1\gamma(2,0) \|\\ 
		& =& \| (1, \alpha) + a_1\mu_1(0,2) - b_1\gamma(2,0)\|\\ 
		& =& \| ( 1- 2b_1 \gamma , \alpha + 2a_1\mu_1) \|
	\end{eqnarray*}
	and similarly, 
	\[
	\| a_1Tv_1 - a_1\{\mu_1(1,1)+ \mu_2(1,-1)) \}+ b_1 w_2 \|= \| (1+ 2b_1 \gamma, \alpha- 2a_1\mu_1) \|. \]
		Again, let $v_4= a_2v_1 + b_2v_3 $ and $ Tv_4 = (\beta, 1),$ where $|\beta|<1.$ Then 
	\begin{eqnarray*}
		\| a_2Tv_1 + a_2\{\mu_1(1,1)+ \mu_2(1,-1)\}+ b_2 w_1 \| & = & \| a_2Tv_1 + a_2\mu_1(0, 2) + b_2 w_1 \| \\
		& = & \| Tv_4 + a_2 \mu_1 (0, 2) - b_2 \gamma ( 2, 0) \| \\ 
		& =& \| ( \beta, 1) + a_2 \mu_1 ( 0, 2) - b_2 \gamma ( 2, 0) \| \\
		& = & \| (\beta - 2 b_2 \gamma, 1+ 2 a_2 \mu_1 ) \|.		 
	\end{eqnarray*} 
	and \[
	\| a_2Tv_1 - a_2\{\mu_1(1,1)+ \mu_2(1,-1)\}+ b_2 w_2 \| =  \| (\beta + 2 b_2 \gamma, 1- 2 a_2 \mu_1 ) \|.
	\]
%	For any $F_1$-associated tuple of scalars $( \mu, -\mu)$ corresponding to  $Tv_1$ with $\epsilon$-dominated norm such that $a_1\mu(0, 2) \neq \theta,$ whenever $w_1=w_2,$  it is easy to observe that at least one of  $ \mu $ and $\gamma$ is non-zero. 
	Since at least $\gamma$ and $\mu_1$ is non-zero, it is immediate that at least one of  $ |1- 2b_1 \gamma|, |1+ 2b_1 \gamma|, | 1+ 2 a_2 \mu_1|$ and $ | 1 - 2 a_2 \mu_1|$ is strictly greater than $1$ 
	and accordingly, at least one of the following must holds: 
	\begin{eqnarray*}
		\| a_1Tv_1 + a_1\{\mu_1(1,1)+ \mu_2(1,-1)\}+ b_1 w_1 \| >1\\ 
		\| a_1Tv_1 - a_1\{\mu_1(1,1)+ \mu_2(1,-1)\}+ b_1 w_2\| > 1\\
		\| a_2Tv_1 + a_2\{\mu_1(1,1)+ \mu_2(1,-1)\}+ b_2 w_1\| >1 \\
		\| a_2Tv_1 - a_2\{\mu_1(1,1)+ \mu_2(1,-1)\}+ b_2 w_2\| > 1.
	\end{eqnarray*}
	In other words, there exists $ S=\{ v_1, v_3\} \subset S_{\mathbb{X}}$ such that $Tv_1 \notin Ext( B_{\ell_{\infty}^2}) $ and $ ( v_1, Tv_1)$ is a $*$-pair of points with respect to $S$. From Theorem \ref{characterization}, we conclude that $T$ is an extreme contraction.
\end{proof}

\begin{remark}
	This result substantiates \cite[Th. 2.8]{MPD}. We observe that any linear operator with  $ | M_T \cap Ext(B_{\mathbb{X}}) | =	8$ is an extreme contraction but $T(M_T \cap Ext(B_{\mathbb{X}})) \cap Ext (B_{\ell_{\infty}^2}) = \phi.$ 
\end{remark}

\noindent \textbf{Acknowledgement.} \\
The authors would like to thank the anonymous referee for his invaluable suugestions and comments. The research of Dr. Debmalya Sain is sponsored by a Maria Zambrano postdoctoral grant. Dr. Sain feels elated to acknowledge the pleasant friendship of Mr. Mrinal Jana in his life! The second  author would like to thank  CSIR, Govt. of India, for the financial support in the form of Junior Research Fellowship under the mentorship of Prof. Kallol Paul.

\end{document}